\title[Set-theoretic Yang-Baxter $\&$ reflection equations]{Algebraic structures in set-theoretic Yang-Baxter 
$\&$  reflection equations}
\author[Anastasia Doikou]{Anastasia Doikou}
\address[A. Doikou] {Department of Mathematics, Heriot-Watt University,
Edinburgh EH14 4AS, and The Maxwell Institute for Mathematical Sciences, Edinburgh}
\email{A.doikou@hw.ac.uk}
\newcommand{\hiddenpower}[2] { \ifnum \numexpr#2=1 #1 \else #1^#2 \fi }
\numberwithin{equation}{section}
\newcommand{\cal}{\mathcal}
\newcounter{diff_order}
\newcounter{diff_power}
\newcommand{\rawdiff}[3]
{
	\setcounter{diff_order}{0}
	\clist_map_inline:nn{#3}{\stepcounter{diff_order}}
	
	\frac{\hiddenpower{#1}{\thediff_order} #2}
	{
		\def\old_var{DefaultValue}
		\setcounter{diff_power}{0}
		
		\clist_map_inline:nn{#3}
		{
			\def\new_var{##1}
			\ifnum \thediff_power=0
				\stepcounter{diff_power}
			\else
				\tl_if_eq:NNTF \new_var \old_var
				{\stepcounter{diff_power}}
				{
					#1 \hiddenpower{\old_var}{\thediff_power}
					\setcounter{diff_power}{1}
				}
			\fi

			\def\old_var{##1}
		}
		
		#1 \hiddenpower{\old_var}{\thediff_power}
	}
}
\newlength{\bibitemsep}\setlength{\bibitemsep}{.2\baselineskip plus .05\baselineskip minus .05\baselineskip}
\newlength{\bibparskip}\setlength{\bibparskip}{0pt}
\let\oldthebibliography\thebibliography
\renewcommand\thebibliography[1]{%
  \oldthebibliography{#1}%
  \setlength{\parskip}{\bibitemsep}%
  \setlength{\itemsep}{\bibparskip}%
}
\newtheorem{thm}{Theorem}[section]
\newtheorem{lemma}[thm]{Lemma}
\newtheorem{cor}[thm]{Corollary}
\newtheorem{pro}[thm]{Proposition}
\newtheorem{defn}[thm]{Definition}
\newtheorem{example}[thm]{Example}
\newtheorem{rem}[thm]{Remark}
\newcommand{\sz}[2]{\sigma_{#1}(#2)}
\newcommand{\tz}[2]{\tau_{#1}(#2)}
\begin{document}






\begin{abstract}
\noindent  
We present resent results regarding invertible, non-degenerate solutions of the set-theoretic Yang-Baxter and reflection equations. 
We  recall the notion of braces and we present  and prove various fundamental properties required for the solutions
of the set theoretic Yang-Baxter equation. We then restrict our attention on involutive solutions and consider $\lambda$ parametric 
set-theoretic solutions of the Yang-Baxter equation and we extract 
the associated quantum algebra. We also discuss the notion of the Drinfeld twist for involutive solutions and their relation to the Yangian.
We next focus on reflections and we derive the associated defining algebra relations for $R$-matrices being  
Baxterized solutions of the symmetric group.  
We show that there exists a ``reflection'' finite sub-algebra for some special choice of reflection maps.  \\
\end{abstract}

\maketitle

\date{}
\vskip 0.4in



\section*{Introduction}

\noindent Yang-Baxter equation (YBE)  is a central object in the framework 
of quantum integrable systems \cite{Baxter, Yang} and quantum algebras \cite{Drinfeld, Jimbo}.   The notion of  
set-theoretic solutions to the Yang-Baxter equation, was first suggested by Drinfeld \cite{Drin} and since then 
a great deal of research has been devoted to this issue (see for instance \cite{ESS, chin}), yielding also significant connections 
between the set-theoretic Yang-Baxter equation and geometric crystals \cite{Eti, BerKaz}, 
or soliton cellular automatons \cite{TakSat, HatKunTak}.
From a purely algebraic point of view the theory of braces  was established by  
W.  Rump  to describe all finite involutive,  set-theoretic solutions of  the Yang-Baxter equation \cite{[25], [26]}, 
whereas skew-braces were then developed in \cite{GV}  to describe non-involutive solutions.

Yang-Baxter equation is the key equation for the construction of integrable systems with (quasi)-periodic boundary conditions.
However, in order to be able to incorporate boundary conditions to these
systems that preserve integrability the boundary Yang-Baxter or reflection equations is also needed \cite{Cherednik, Sklyanin}. 
The set-theoretic reflection equation together with the first examples of solutions first 
appeared in \cite{CauZha}, while a more systematic study and a classification inspired by maps appearing 
in integrable discrete systems presented 
in \cite{CauCra}. Other solutions were also considered and used 
within the context of cellular automata \cite{KuOkYa}, whereas in \cite{SmoVenWes, Katsa} 
the theory of braces was used  
to produce families of new solutions to the reflection 
equation, and in \cite{DeCommer} skew braces were used to
produce reflections. Moreover, key connections between  set-theoretic solutions, quantum 
integrable systems and the associated 
quantum algebras were uncovered in \cite{DoiSmo, DoiSmo2, Doikoutw} and \cite{DoGhVl, DoiRyb}. Here, we present some 
of the fundamental recent results, which have opened new paths on the study of quantum integrable systems coming 
from set-theoretic solutions of the Yang-Baxter and reflection equations.

One of the main aims of this article is to present basic results regarding (skew) braces and the set-theoretic Yang-Baxter and reflection equations in  a detailed,  pedagogical way so that non-experts can follow through. Specifically, the main objectives of this article are:
\begin{itemize}
\item To introduce braces as the main algebraic structure underlying set-theoretic solutions of the Yang-Baxter equation.
\item To study the quantum algebra associated to set-theoretic solutions of the Yang-Baxter equation. 
\item To Baxterize involutive set-theoretic solutions and obtain $\lambda$-dependent solutions, which give rise to a certain affine algebra. 
The twisting of  involutive solutions and their quantum algebra and their connection to the Yangian is also discussed.
\item To study fundamental solutions of the reflection equation for set-theoretic $R$-matrices and construct the reflection algebra using Baxterized set-theoretic $R$ and reflection matrices.
\end{itemize}

The outline of the study is as follows. 

\begin{enumerate}
 \item In Section 1 we recall the notion of the set-theoretic YBE and we 
introduce the algebraic structure underlying the solutions of the YBE called braces \cite{[25], [26]}.  Then starting from invertible 
solutions of the YBE we reconstruct a slightly more general structure called near brace and vise versa using near braces we define
suitable bijective maps tha satisfy the YBE (see also \cite{DoiRyb}). In Subsection 1.1 we use Baxterized solutions coming 
from involutive set-theoretic solutions of the YBE and study the associated quantum algebra, which surprisingly turns out to be a twisted 
version of the Yangian \cite{DoiSmo}-\cite{DoiRyb}.   A brief discussion on the admissible set-theoretic twist is also presented. 

\item In Section 2 we discuss the set-theoretic reflection equation.
More precisely,  we review
some recent results on solutions of the set-theoretic reflection equation \cite{CauCra, SmoVenWes}. 
In Subsection 2.1 we derive the associated defining algebra relations (reflection algebra) for Baxterized solutions of the YB and reflection equations
and we show that there exist a finite sub-algebra of the reflection algebra for some special choice of ``reflection elements''  \cite{DoiSmo2}.
\end{enumerate}

\section{The set-theoretic Yang-Baxter equation}

\noindent Let $X=\{{\mathrm x}_{1}, \ldots, {\mathrm x}_{{\cal N}}\}$ be a set and ${\check r}:X\times X\rightarrow X\times X$.
 Denote \[{\check r}(x,y)= \big (\sigma _{x}(y), \tau _{y}(x)\big ).\] 
We say that $\check r$ is non-degenerate if $\sigma _{x}$ 
and $\tau _{y}$ are bijective functions. Also, the solution $(X, \check r)$  is involutive if $\check r ( \sigma _{x}(y), \tau _{y}(x)) = 
(x, y)$, ($\check r \check r (x, y) = (x, y)$). We focus on non-degenerate, invertible solutions of the set-theoretic braid equation:
\[({\check r}\times id_{X})(id_{X}\times {\check r})({\check r}\times id_{X})=(id_{X}\times {\check r})({\check r}
\times id_{X})(id_{X}\times {\check r}).\]

$ $
Let us now recall the role of skew braces in the derivation of non-degenerate, set-theoretic solutions of the Yang-Baxter equation.
Let us first give the definitions of skew braces. 

\begin{defn}\cite{[25], [26], [6]}.
A {\it left skew brace} is a set $B$ together with two group operations $+,\circ :B\times B\to B$, 
the first is called addition and the second is called multiplication, such that for all $ a,b,c\in B$,
\begin{equation}\label{def:dis}
a\circ (b+c)=a\circ b-a+a\circ c.
\end{equation}
If $+$ is an abelian group operation $B$ is called a {\it left brace}.
Moreover, if $B$ is a left skew brace and for all $ a,b,c\in B$ $(b+c)\circ a=b\circ a-a+c \circ a$, then $B$ is called a 
{\it skew brace}. Analogously if $+$ is abelian and $B$ is a skew brace,  then $B$ is called a {\it brace}.
\end{defn}
The additive identity of a brace $A$ will be denoted by $0$ and the multiplicative identity by $1$.  
In every skew brace $0=1$.

We state below the fundamental Theorems on non-degenerate solutions of the set-theoretic solutions of the YBE.
Rump showed the following  theorem for  involutive, non degenerate, set-theoretic solutions.
\begin{thm}\label{Rump}(Rump's theorem, \cite{[25], [26]}).  
Assume  $(B, +, \circ)$ is a left brace. If the map  $\check r_B: B\times B \to B \times B$ is defined as 
${\check r}_B(x,y)=(\sigma _{x}(y), \tau _{y}(x))$, where $\sigma _{x}(y)=x\circ y-x$, 
$\tau _{y}(x)=t\circ x-t$, and $t $ 
is the inverse of $\sigma _{x}(y)$ in the circle group $(B, \circ ),$ then  $(B, \check r_B)$ is an involutive,  
non-degenerate solution of the braid equation.\\
Conversely,  if $(X,\check r)$ is an involutive, non-degenerate solution of the braid equation, then there exists a left brace $(B,+, \circ)$ 
(called an  underlying brace of the solution $(X, \check r)$) such that $B$ contains $X,$ $\check r_B(X\times X )\subseteq X \times X,$
and the map $\check r$ is equal to the restriction of $\check r_B$ to $X \times X.$ Both the additive $(B, +)$ 
and multiplicative $(B,\circ)$ groups of the left brace $(B,+, \circ)$ are generated by $X.$
\end{thm}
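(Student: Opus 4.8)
The plan is to prove the two directions separately. For the forward direction, assuming $(B,+,\circ)$ is a left brace, I would first verify that the maps $\sigma_x, \tau_y$ are well-defined bijections. Non-degeneracy amounts to checking that for fixed $x$ the map $y\mapsto x\circ y - x$ is a bijection of $B$: this follows since $y \mapsto x\circ y$ is a bijection of the circle group and $z\mapsto z-x$ is a bijection of the additive group. A similar argument, tracking the auxiliary element $t$, handles $\tau_y$. The involutivity $\check r_B\check r_B = \id$ I would establish by a direct computation: writing $u = \sigma_x(y) = x\circ y - x$ and $v = \tau_y(x) = t\circ x - t$ with $t = u^{-1}$ (circle inverse), one shows $\sigma_u(v) = x$ and $\tau_v(u) = y$ using the brace distributivity law \eqref{def:dis} repeatedly. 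The braid equation is the most computationally involved part of this direction: I would unwind both sides acting on a triple $(x,y,z)$ and reduce the resulting identities to consequences of \eqref{def:dis} and the associativity of $\circ$; this is essentially Rump's original verification and I would present the key intermediate identities rather than every line.

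For the converse, given an involutive non-degenerate solution $(X,\check r)$, the construction of the underlying brace is the substantial part. The standard approach (following Rump, and the structure-group viewpoint of Etingof–Schedler–Soloviev) is to form the \emph{structure group}
\[
G(X,\check r) = \langle\, X \mid x\circ y = \sigma_x(y)\circ \tau_y(x)\ \text{ for all } x,y\in X\,\rangle,
\]
take this as the multiplicative group $(B,\circ)$, and then produce a compatible abelian addition. The key technical input is that the left action $x\mapsto \sigma_x$ extends to an action of $G(X,\check r)$ on the free abelian group $\Z^{(X)}$, and that the map sending a word to the corresponding element of $\Z^{(X)}$ is a bijection $G(X,\check r)\to \Z^{(X)}$ (this is where involutivity is essential — it guarantees the relations are "square-free" in the right way and the associated cocycle is bijective). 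Transporting the abelian group structure of $\Z^{(X)}$ back along this bijection gives $(B,+)$, and one checks the brace relation \eqref{def:dis} holds by construction. Finally, since $X$ sits inside $G(X,\check r)$ as generators, it lies in $B$; one verifies directly from the defining relations that $\check r_B$ restricted to $X\times X$ recovers $\check r$, and that $X$ generates both $(B,+)$ and $(B,\circ)$.

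I expect the main obstacle to be the converse direction, specifically proving that the diagonal map $G(X,\check r)\to \Z^{(X)}$ is a bijection, since this is what makes the additive structure well-defined; injectivity and surjectivity both rely delicately on involutivity and the Yang-Baxter relation, via the fact that the 2-cocycle defining the structure group is non-degenerate. I would isolate this as a lemma. The forward direction's braid-equation check, while lengthy, is routine algebra once the distributivity law is in hand. I would also remark that one may alternatively quote the structure-group construction from \cite{ESS} and then endow it with the brace structure, shortening the exposition at the cost of self-containedness.
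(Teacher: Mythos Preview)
The paper does not actually prove Rump's theorem: Theorem~\ref{Rump} is stated with citation to \cite{[25],[26]} and no proof is given. What the paper does prove are the near-brace analogues, Theorem~\ref{theorem1} and Theorem~\ref{prop:main}, so any comparison has to be against those.

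For the forward direction your plan is essentially the paper's strategy in Theorem~\ref{prop:main}: one isolates a short list of identities (Proposition~\ref{lem:long} in the paper, namely $\sigma_a(b)\circ\tau_b(a)=a\circ b$, $\sigma_a(\sigma_b(c))=\sigma_{a\circ b}(c)+1$, and $\sigma_a(b)\circ\sigma_{\tau_b(a)}(c)=\sigma_a(b\circ c)+1$) and then checks the three braid constraints (\ref{C1})--(\ref{C3}) directly from these. Your outline of ``unwind both sides and reduce to \eqref{def:dis}'' is the same in spirit; the paper just packages the intermediate identities more explicitly.

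For the converse direction your approach and the paper's diverge substantially. You propose the structure-group route: build $G(X,\check r)$ as $(B,\circ)$, prove a bijection $G(X,\check r)\to\Z^{(X)}$, and transport the abelian addition. This is indeed the standard proof of Rump's converse and is the correct way to handle a bare set $X$. The paper's Theorem~\ref{theorem1}, by contrast, does \emph{not} prove this: it assumes from the outset that $(X,\circ)$ is already a group and that the $+$ defined by $y+x:=x\circ\sigma_{x^{-1}}(y)$ (equation~(\ref{mapzz})) is associative, and then derives the remaining (near-)brace axioms. So the paper's ``converse'' is much weaker in hypotheses than Rump's, and the hard step you correctly identify --- bijectivity of $G(X,\check r)\to\Z^{(X)}$ --- is simply absent there. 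Your proposal is the right one for the theorem as stated; the paper's argument is a reconstruction result under additional standing assumptions rather than a proof of the full converse.
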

\begin{rem} [Rump]\label{nilpotent}
 Let $(N, +, \cdot)$ be an associative ring.  If for $a,b\in N$ we define 
 \[a\circ b=a\cdot b+a+b,\] then $(N, +, \circ )$ is a brace if and only if $(N, +, \cdot)$ is a radical ring.
\end{rem}
Guarnieri and Vendramin \cite{GV}, generalized Rump's result to left skew braces and non-degenerate,  
non-involutive solutions.
\begin{thm}[{\it Theorem \cite{GV}}]\label{thm:GV}
Let $B$ be a left skew brace, then the map $\check{r}_{GV}:B\times B\to B\times B$ given for all $ a,b\in B$ by
$$
\check{r}_{GV}(a,b)=(-a+a\circ b,\ (-a+a\circ b)^{-1}\circ a\circ b)
$$
is a non-degenerate solution of set-theoretic YBE.  
\end{thm}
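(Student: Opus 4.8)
The plan is to verify directly that $\check r_{GV}$ satisfies the set-theoretic braid equation, and to establish non-degeneracy by exhibiting explicit inverses for the two component maps. Let me introduce notation consistent with the paper: writing $\check r_{GV}(a,b) = (\sigma_a(b), \tau_b(a))$, we have $\sigma_a(b) = -a + a\circ b$ and $\tau_b(a) = \sigma_a(b)^{-1}\circ a \circ b$, where the inverse is taken in the circle group $(B,\circ)$. The first thing I would record is the identity $\sigma_a(b)\circ \tau_b(a) = a\circ b$, which is immediate from the definition of $\tau$; this ``compatibility'' relation is the workhorse for the braid-equation check. I would also note the useful consequence that, since $\lambda_a(x) := -a + a\circ x$ defines a left action of $(B,\circ)$ on $(B,+)$ by automorphisms (this is exactly the left-skew-brace axiom \eqref{def:dis} rearranged), one has $\sigma_a = \lambda_a$ and the cocycle-type relation $\lambda_{a\circ b} = \lambda_a \circ \lambda_b$.

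Next I would prove non-degeneracy. For fixed $a$, the map $\sigma_a = \lambda_a$ is bijective on $B$ with inverse $\lambda_{a^{-1}}$ (inverse of $a$ in the circle group), so $\sigma_a$ is invertible. For $\tau$, I would check that $b\mapsto \tau_b(a)$ is invertible for fixed $a$: using $\sigma_a(b)\circ\tau_b(a)=a\circ b$ one solves for $b$ in terms of $\tau_b(a)$ by first recovering $a\circ b$ and hence $b$, after expressing $\sigma_a(b)$ through $\tau_b(a)$ and $a$; this is a short manipulation in the two group structures. This gives that $\check r_{GV}$ is non-degenerate in the sense defined at the start of Section 1.

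The main step is the braid equation $(\check r_{GV}\times \mathrm{id})(\mathrm{id}\times \check r_{GV})(\check r_{GV}\times\mathrm{id}) = (\mathrm{id}\times\check r_{GV})(\check r_{GV}\times\mathrm{id})(\mathrm{id}\times\check r_{GV})$ on $B\times B\times B$. The approach is to apply both sides to a general triple $(a,b,c)$ and compare the three output coordinates. Using $\sigma_a = \lambda_a$, the cocycle relation $\lambda_{a\circ b}=\lambda_a\lambda_b$, and the compatibility relation $\sigma_a(b)\circ\tau_b(a) = a\circ b$, each of the three output coordinates collapses to an expression symmetric between the two sides. Concretely: the first coordinate of both sides equals $\lambda_a\lambda_b(c)$ combined appropriately, i.e. can be written purely in terms of $a\circ b\circ c$; the last coordinate of both sides is governed by the ``$\tau$ of $\tau$'' maps and again reduces via $\sigma(b)\circ\tau(a)=a\circ b$; the middle coordinate is the one requiring the most care. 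I expect the middle-coordinate identity to be the main obstacle: it mixes a $\sigma$-map, a $\tau$-map and the skew-brace distributivity all at once, and one genuinely needs both the left action property of $\lambda$ and the definition of $\tau$ to make the two sides coincide. A clean way to organize this is to prove the equivalent statement that $R = \tau\circ\check r_{GV}\circ\tau$ (conjugation by the flip $\tau(a,b)=(b,a)$) satisfies the quantum Yang--Baxter equation $R_{12}R_{13}R_{23}=R_{23}R_{13}R_{12}$, and to track the three ``legs'' of $R$ using the homomorphism property of $\lambda$ on the circle group; this is essentially the cocycle/matched-pair argument and keeps the bookkeeping minimal. Once all three coordinates agree, the theorem follows.
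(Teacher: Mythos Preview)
The paper does not give its own proof of Theorem~\ref{thm:GV}; it quotes the result from \cite{GV} and instead proves the near-brace generalization, Theorem~\ref{prop:main}. Your braid-equation plan coincides with that proof: the paper first isolates (Proposition~\ref{lem:long}) the identities $\sigma_a(b)\circ\tau_b(a)=a\circ b$, $\sigma_a\circ\sigma_b=\sigma_{a\circ b}$ (up to a $+1$ shift in the near-brace setting), and $\sigma_a(b)\circ\sigma_{\tau_b(a)}(c)=\sigma_a(b\circ c)$ (again up to $+1$), and then checks the three braid constraints (\ref{C1})--(\ref{C3}) one at a time. Your ``compatibility'' and ``cocycle'' relations are exactly items (1) and (2) of Proposition~\ref{lem:long} specialized to $0=1$, and the middle-coordinate computation you flag as hardest is precisely what item (3) handles. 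So on the braid-equation side you and the paper are doing the same thing; you simply phrase the cocycle as ``$\lambda$ is a $(B,\circ)$-action on $(B,+)$ by automorphisms''.

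There is, however, a genuine slip in your non-degeneracy argument. Non-degeneracy, as defined at the start of Section~1, requires that for each fixed $b$ the map $a\mapsto\tau_b(a)$ be a bijection; you instead propose to invert $b\mapsto\tau_b(a)$ for fixed $a$, which is a different statement. Moreover, your sketch (``express $\sigma_a(b)$ through $\tau_b(a)$ and $a$, then recover $a\circ b$'') is circular for either map, since $\sigma_a(b)$ still depends on the unknown variable. The bijectivity of $\tau_b$ does hold in any left skew brace, but it is not a one-line consequence of $\sigma_a(b)\circ\tau_b(a)=a\circ b$; one needs an explicit formula for $\tau_b^{-1}$ or an auxiliary argument (e.g.\ via the opposite skew-brace structure, as in \cite{GV}). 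Note that the paper's own Theorem~\ref{prop:main} only asserts the braid equation and does not address non-degeneracy, so this part of Theorem~\ref{thm:GV} genuinely requires input beyond what the paper develops.
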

We will show below a slight generalization of the theorems above by introducing the notion of a (commutative) {\it near brace}. This generalization concerns the
condition $0=1,$ which is always true for (skew) braces, whereas in the case of near braces 
this condition does not necessarily hold any more.  In fact,  skew braces can be seen as a special case of near braces.

First we will show  that given any non-degenerate,  invertible solution 
${\check r}(x,y)=(\sigma _{x}(y), \tau _{y}(x))$  a  near brace structure can be reconstructed and vise versa given a near brace
every ${\check r}(x,y)=(\sigma _{x}(y), \tau _{y}(x))$, 
where $\sigma _{x}(y)=x\circ y-x$,  $\tau _{y}(x)=\sigma _{x}(y)^{-1} \circ x \circ y$
is a solution of the YBE.  

We prove below these fundamental statements.
We first review the constraints arising by requiring $(X, \check r)$ ($\check r (x,y) = (\sigma_{x}(y)),\tau_{y}(x))$ to be a solution 
of the braid equation \cite{Drin,  ESS, [25], [26]}. Let,
\[({\check r}\times \mbox{id})(\mbox{id}\times {\check r})({\check r}\times \mbox{id})(\eta, x, y)=(L_1, L_2, L_3),\]
\[(\mbox{id}\times {\check r})({\check r}\times \mbox{id})(\mbox{id}\times {\check r})(\eta,x,y)= (R_1, R_2, R_3),\]
where, after using the forms of the set theoretic solution we obtain:
\begin{eqnarray}
L_1 = \sigma_{\sigma_{\eta}(x)}(\sigma_{\tau_{x}(\eta)}(y)),\quad L_2 = 
\tau_{\sigma_{\tau_x(\eta)}(y)}(\sigma_{\eta}(x)), \quad L_3 =\tau_y(\tau_x(\eta)),\nonumber
\end{eqnarray}
\begin{eqnarray}
R_1=\sigma_{\eta}(\sigma_x(y)), \quad R_2=\sigma_{\tau_{\sigma_x(y)}(\eta)}(\tau_{y}(x)), \quad 
R_3= \tau_{\tau_{y}(x)}(\tau_{\sigma_{x}(y)}(\eta)). \nonumber
\end{eqnarray}
And by requiring $L_i =R_i,$ $i\in \{1,2,3\}$ we obtain the following fundamental constraints for the associated maps:
\begin{eqnarray}
&&  \sigma_{\eta}(\sigma_x(y))= \sigma_{\sigma_{\eta}(x)}(\sigma_{\tau_{x}(\eta)}(y)), \label{C1}\\
&& \tau_y(\tau_x(\eta)) = \tau_{\tau_{y}(x)}(\tau_{\sigma_{x}(y)}(\eta)), \label{C2}\\
&&  \tau_{\sigma_{\tau_x(\eta)}(y)}(\sigma_{\eta}(x))= \sigma_{\tau_{\sigma_x(y)}(\eta)}(\tau_{y}(x)). \label{C3}
\end{eqnarray}

We start with the first part of our construction.  As mentioned above we are going to slightly 
generalize the structure of the skew brace 
by introducing the near brace \cite{DoiRyb}, which can be reconstructed from any non-degenerate solution 
of the set-theoretic braid equation.
For the rest of the subsection we consider $X$ to be a set and there exists a binary group operation 
$\circ :X\times X\to X$,  with a neutral element $1\in X$ and an inverse $x^{-1} \in X,$ 
$\forall x \in X.$  There also exists a bijective function $\sigma_x: X \to X,$  $\forall x \in X,$
such that $y \mapsto \sigma_x(y).$ 
We then define another binary operation $+: X\times X \to  X,$ such that
\begin{equation}
y + x:= x \circ \sigma_{x^{-1}}(y) \label{mapzz}
\end{equation}
and we assume that it is {\it associative} (this assumption leads to certain constraints, 
for more details the interested reader is referred to \cite{DoiRyb}).  
Notice that in general $+$ is a non commutative operation,  but in the case of involutive solutions,  
it turns out to be commutative as will be clear in 
Theorem \ref{theorem1} (see also \cite{DoiRyb}).

We focus on non-degenerate, invertible solutions $\check r.$  
Given that $\sigma_x$ and $\tau_y$ are bijections the inverse maps also exist such that
\begin{equation}
\sigma^{-1}_x(\sigma_x(y)) = \sigma_x(\sigma^{-1}_x(y))= y ,  \quad  \tau^{-1}_{y}(\tau_y(x)) = \tau_{y}(\tau^{-1}_y(x)) =x \label{inv1}
\end{equation}
Let
the inverse $\check r^{-1}(x,y) = (\hat \sigma_x(y), \hat \tau_y(x))$ exist with $\hat \sigma_x,\ \hat \tau_y$ 
being also bijections,  that satisfy:
\begin{equation}
\sigma_{\hat \sigma_x(y)} (\hat \tau_y(x)) = x=\hat \sigma_{\sigma_x(y)} (\tau_y(x)), \quad 
\tau_{\hat \tau_y(x)}(\hat \sigma_x(y)) = y = \hat \tau_{\tau_y(x)}(\sigma_x(y)).   \label{ide1}
\end{equation}
Taking also into consideration (\ref{inv1}) and (\ref{ide1})  and that  $\sigma_x,\tau_y$ 
and $\hat \sigma_x,  \hat\tau_y$ are bijections, we deduce:
\begin{equation}
\hat \sigma^{-1}_{\sigma_x(y)}(x) = \tau_y(x),  \quad \hat \tau^{-1}_{\tau_y(x)}(y) = \sigma_x(y).
\end{equation}

We assume that the map $\hat \sigma$ appearing in the inverse matrix $\check r^{-1}$ has the general form
\begin{equation}
\hat \sigma_{x}(y)  =  x \circ (x^{-1} + y ).  \label{mapzz2}
\end{equation}
The origin of the above map comes from the definition: $x+y:=  x \circ \hat \sigma_{x^{-1}}(y).$ 
The derivation of $\check r$ goes hand in hand with the derivations of $\check r^{-1}$
(see details in \cite{DoiRyb} and later in the text when deriving a generic $\check r$ and its inverse).  
In the involutive case the two maps coincide and $x+y = y+x.$ However, 
for any non-degenerate,  non-involutive solution 
both bijective maps $\sigma_x, \hat \sigma_x$ should be 
considered together with the fundamental conditions (\ref{ide1}).

We present below a series of useful lemmas that will lead to the main theorem. 
We  consider for the sake of simplicity only finite sets here.
\begin{rem} \label{rr1}  
Let us first remind a known fact. We recall that $\sigma_x$ is a bijective function, then using (\ref{mapzz})
$\sigma_x(y_1) = \sigma_x(y_2) \Leftrightarrow   y_1  + x^{-1} = y_2  + x^{-1},$\\
which automatically suggest right cancellation in $+.$ Similarly,  $\hat \sigma_x$ is a bijective function,  which leads  
also to left cancellation in $+.$
\end{rem}

\begin{lemma} \label{bi}
For all $y\in X$, the operation $+x:X\to X$ is a bijection. 
\end{lemma}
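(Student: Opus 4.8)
The plan is to read the claim as the statement that, for every fixed $x\in X$, the right translation $+x\colon X\to X$, $y\mapsto y+x$, is a bijection, and to prove it directly from the defining formula (\ref{mapzz}).

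First I would rewrite $y+x=x\circ\sigma_{x^{-1}}(y)$ and observe that this map is the composite of $\sigma_{x^{-1}}$ followed by left multiplication by $x$ in the group $(X,\circ)$. The first map is a bijection of $X$ by the standing non-degeneracy hypothesis that each $\sigma_x$ is a bijective function; the second is a bijection of $X$ with inverse left multiplication by $x^{-1}$. A composition of bijections is a bijection, which is exactly the assertion. If one wants an explicit inverse, for $z\in X$ the equation $y+x=z$ reads $x\circ\sigma_{x^{-1}}(y)=z$, i.e. $\sigma_{x^{-1}}(y)=x^{-1}\circ z$, i.e. $y=\sigma_{x^{-1}}^{-1}(x^{-1}\circ z)$, which exhibits existence and uniqueness of the preimage in one line.

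Alternatively — and this route uses what has already been recorded — I would invoke Remark \ref{rr1}: the bijectivity of $\sigma_x$ forces right cancellation in $+$, so $y_1+x=y_2+x$ implies $y_1=y_2$ and $+x$ is injective; since $X$ is finite, an injective self-map is automatically surjective, hence bijective.

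I do not expect any genuine obstacle here: the statement is essentially a bookkeeping consequence of the definition of $+$ together with the non-degeneracy of $\sigma$ (and, on the second route, finiteness of $X$). The only point worth flagging is that associativity of $+$ plays no role in this particular lemma — it is the bijectivity of $\sigma_{x^{-1}}$ and the group structure of $\circ$ that carry the argument — so I would deliberately keep associativity out of the proof.
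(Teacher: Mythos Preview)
Your proposal is correct, and your alternative route is essentially the paper's own argument: the paper takes $y_1+x=y_2+x$, unpacks via (\ref{mapzz}) to $x\circ\sigma_{x^{-1}}(y_1)=x\circ\sigma_{x^{-1}}(y_2)$, cancels $x$ in the group $(X,\circ)$, uses injectivity of $\sigma_{x^{-1}}$, and then appeals to finiteness of $X$ to promote injectivity to bijectivity. Your first route --- recognising $y\mapsto y+x$ as the composite of the bijection $\sigma_{x^{-1}}$ with left $\circ$-translation by $x$ --- is a mild improvement, since it delivers surjectivity directly and makes the finiteness hypothesis unnecessary. One small point: the paper's proof closes with the companion statement that left translation $x+$ is also a bijection, obtained analogously from the bijectivity of $\hat\sigma_x$ and (\ref{mapzz2}); this is used in the subsequent lemmas, so you may want to record it as well.
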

\begin{proof}
Let $y_1,y_2\in X$ be such that $y_1+x=y_2+x$, then
$$
x\circ \sigma_{x^{-1}}(y_1)=x\circ \sigma_{x^{-1}}(y_2)\implies  \sigma_{x^{-1}}(y_1)=\sigma_{x^{-1}}(y_2),
$$
since $\circ$ is a group operation and $\sigma_{x^{-1}}$ is injective, we get that $y_1=y_2$ and $+x$ is injective for any $x\in X$. 
For finite sets injectivity is sufficient to guarantee bijectivity. 
Thus $+x$ is a bijection.  Similarly,  from the bijectivity of $\hat \sigma_x$ and (\ref{mapzz2}) we show that $x+$ is also a bijection.
\end{proof}

We now introduce the notion of a neutral elements in $(X,+)$
\begin{lemma} 
Let $0_x \in X$ such that $x+ 0_x = x,$ $\forall x \in X,$ then $0_x = 0_y =0,$ $\forall x, y \in X$ 
and $0$ is a unique right  neutral element.  The right neutral element $0$ is also left neutral element. 
\end{lemma}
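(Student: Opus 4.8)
The plan is to exploit the two cancellation laws for $+$ recorded in Remark~\ref{rr1} together with the bijectivity of $+x$ and $x+$ from Lemma~\ref{bi}. First I would establish that the right neutral elements coincide. Fix $x,y\in X$; I want to show $0_x = 0_y$. Since $+x$ is a bijection (Lemma~\ref{bi}) there is a unique element solving $z + x = x$, namely $z = 0_x$, and similarly $0_y$ is the unique solution of $w+y=y$. To link the two, I would use the associativity of $+$: consider the element $y + 0_x$ (or more symmetrically manipulate $(y+0_x)+x$ versus $y+(0_x+x)$-type expressions). Concretely, from $x+0_x = x$ one can right-append and use associativity to propagate $0_x$ through a chain, and then use right cancellation (Remark~\ref{rr1}) to strip off the common right factor, forcing $y+0_x = y$; by uniqueness of the right neutral element of $y$ this gives $0_x = 0_y$. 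Hence there is a single right neutral element, which I then call $0$.

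Next I would show $0$ is also a left neutral element, i.e. $0 + x = x$ for all $x$. Here I would use that $x+$ is a bijection, so there is a unique $u$ with $x + u = $ (whatever target is needed), and left cancellation from Remark~\ref{rr1}. The cleanest route: take any $x$, and consider $0 + x$. Using $x + 0 = x$ and associativity, compute $(0+x) + $ something and compare with $x + $ the same something, using that $0$ is right neutral on the outer factor; then left-cancel (Remark~\ref{rr1}) the common left factor to conclude $0 + x = x$. Alternatively, one can observe that since $+$ is associative with a two-sided... — actually only right neutrality is given — so the argument must genuinely go through one of the cancellation laws rather than a generic monoid fact.

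The main obstacle I anticipate is the bookkeeping of which cancellation law (left from $\hat\sigma_x$ bijective, right from $\sigma_x$ bijective) applies at each step, since $+$ is noncommutative; one must be careful to only ever cancel on the side where the relevant injectivity is available, and to feed associativity the right parenthesization so that the common factor lands on that side. Once the pairing of "propagate $0$ via associativity, then cancel on the correct side" is set up correctly, both claims — uniqueness of the right neutral element and its being left neutral — follow in a few lines, and no further structure (no use of $\circ$ beyond what is already encoded in $+$ via \eqref{mapzz} and \eqref{mapzz2}) is needed.
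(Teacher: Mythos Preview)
Your approach is essentially the paper's: propagate the neutral element via associativity, then cancel. The one concrete slip is in the uniqueness step: right-appending $w$ to $x+0_x=x$ and then cancelling yields $0_x+w=w$ (left neutrality of $0_x$), not $w+0_x=w$ as you write, and the cancellation invoked there is \emph{left} cancellation, not right. The paper instead left-appends $y$ to obtain $(y+x)+0_x=y+x$, compares with $(y+x)+0_{y+x}=y+x$, and left-cancels $y+x$ to get $0_x=0_{y+x}$; surjectivity of $+x$ (Lemma~\ref{bi}) then gives $0_x=0_w$ for every $w$. For the second claim the paper right-appends $x$ to $y+0=y$ to get $y+(0+x)=y+x$ and cancels $y$ on the left, which is precisely the manoeuvre you sketch. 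So your anticipated ``bookkeeping'' obstacle is real but minor: once the sides are straightened out your argument coincides with the paper's.
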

\begin{proof}
Let $0_x\in X$ exists  $\forall x \in X, $ and recall the definition of $+$ in (\ref{mapzz}), then
\begin{equation}
x+0_x  = x \Rightarrow y+x +0_x = y+x, \nonumber\\
\end{equation}
but also $~y+x + 0_{y+x} = y+x.$

The last two equations lead to $y+x + 0_x  = y+x +0_{y+x},$ and due to Remark \ref{rr1} 
left cancellation holds,  thus after 
setting $y+x = w$ and recalling Remark \ref{bi}, we deduce
$0_x = 0_w,$ $ \forall x,w \in X.$

Moreover,  $y+0 =y \Rightarrow y+0 +x = y+x$ and due to associativity and 
right cancellation (Remark \ref{rr1}) for $+$ we deduce $0+x = x.$
\end{proof}

\begin{lemma} Let $0$ be the neutral element in $(X,+)$,  then
$\forall x \in X,$ $\exists -x \in X,$ such that $-x  + x = 0$ (left inverse).  
Moreover, $-x \in X$ is a right inverse,  i.e.  $x -x = 0$ $\forall x \in X.$  That is $(X,+,0)$ is a group.
\end{lemma}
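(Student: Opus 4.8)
The plan is to obtain the left inverse directly from bijectivity, and then to upgrade it to a two-sided inverse using only associativity of $+$, the fact that $0$ is a two-sided neutral element (the previous lemma), and the cancellation property already extracted in Remark \ref{rr1}. No further structural input should be needed.

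First, fix $x \in X$. By Lemma \ref{bi} the map $+x : X \to X$ is a bijection, and since $0 \in X$ there is a (unique) element of $X$, which I denote $-x$, with $-x + x = 0$. This gives the required left inverse.

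Next I would show that this $-x$ is automatically a right inverse. Using associativity of $+$, the relation $-x + x = 0$, and left-neutrality of $0$, one computes
\[ -x + \big( x + (-x)\big) = \big((-x) + x\big) + (-x) = 0 + (-x) = -x. \]
On the other hand, right-neutrality of $0$ gives $-x + 0 = -x$. Comparing, we get $-x + \big(x + (-x)\big) = -x + 0$, and left cancellation in $+$ (Remark \ref{rr1}) then yields $x + (-x) = 0$. Hence $-x$ is a two-sided inverse of $x$. Collecting this with the preceding lemmas — associativity of $+$ (assumed), the two-sided neutral element $0$, and the existence of two-sided inverses — we conclude that $(X,+,0)$ is a group.

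I do not expect a genuine obstacle here; this is essentially the standard semigroup-to-group argument. The only point requiring care is to keep the passage from a one-sided to a two-sided inverse from tacitly assuming what is to be proved, which is why the computation above is arranged to use solely associativity, the two-sided neutral $0$, and the left cancellation already available from the bijectivity of $\hat\sigma_x$.
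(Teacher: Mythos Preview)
Your argument is correct and matches the paper's proof: the paper likewise obtains $-x$ from the bijectivity of $\sigma_{x^{-1}}$ (equivalently of $+x$), giving the explicit formula $-x=\sigma_{x^{-1}}^{-1}(x^{-1}\circ 0)$, and then upgrades to a two-sided inverse via associativity and cancellation. The only cosmetic difference is that the paper cancels on the right in $x+(-x)+x=0+x$, whereas you cancel on the left in $-x+(x+(-x))=-x+0$.
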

\begin{proof} 
Observe that due to bijectivity of $\sigma_x$, we can consider $-x:=\sigma_{x^{-1}}^{-1}(x^{-1}\circ 0)$. 
Simple computation shows it is a left inverse,
$$
-x+x=x\circ \sigma_{x^{-1}} (\sigma_{x^{-1}}^{-1}(x^{-1}\circ 0))=0.
$$
By associativity we deduce that $x+(-x)+x=0+x$, we get that $x+(-x)=0$, and $-x$ is the inverse.  
\end{proof}
By having assumed associativity of $+$ we have shown that $(X, +)$ is also a group.

\begin{rem} \label{remr2} It is worth noting that the usual distributivity 
rule does not apply.  Indeed let $(X,+)$ and $(X, \circ)$ be both groups.  We now consider the usual distributivity rule: 
 $ a\circ ( 0 +a^{-1}) =1 \Rightarrow a\circ 0 + 1 = 1 \Rightarrow a\circ 0 = 0,$ 
but given that $0 \in (X, \circ)$ is invertible,  the latter leads to $a = 1,$ $\forall a \in X,$ which is not true.
We should therefore introduce a more general distributivity rule in this context.  
Indeed, henceforth we assume $a\circ (b +c) = a \circ b +\phi(a) + b\circ c,$ $\phi(a)$ to be identified.
\end{rem}

\begin{thm} \label{theorem1}
Let $(X, \circ)$ be a group and $\check r: X \times X \to X \times X$ be 
such that $\check r (x, y) = (\sigma_x(y), \tau_x(y))$ is a non-degenerate, invertible solution of the 
set-theoretic braid equation and $(X,+)$($+$ is  defined in (\ref{mapzz})) is a group.  Moreover,  we assume that:
\begin{itemize}
\item\label{prop:con1} There exists $\phi:X\to X$ such that for all $a,b,c\in X$ $a\circ (b+c)=a\circ b+\phi(a)+a\circ c.$
\item The neutral element $0$ of $(X, +)$ has a left and right distributivity,  i.e.  $(a+b) \circ 0  = a \circ 0 +\hat \phi(0) + b \circ 0.$ 
\end{itemize}
Then for all $a,b,c\in X$ the following statements hold:
\begin{enumerate}
\item $\phi(a) = - a\circ 0$ and $~\widehat{\phi}(h) = -0\circ 0$, \label{prop:main2}
\item  $\sigma_a(b)=a\circ b-a\circ 0+1.$\label{prop:main3}
\item $a-a\circ 0 =1=-a\circ a +a$ and (i) 1+a  =a+1 (ii) $0\circ 0=-1$  (iii) $1+1 = 0^{-1}.$  \label{prop:main4} 
\item $\hat \sigma_a(b) \circ \hat \tau_b(a)= a\circ b  = \sigma_a(b) \circ\tau_b(a).$ \label{prop:main4b} 
\end{enumerate}
\end{thm}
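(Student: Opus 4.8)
The plan is to establish the four claims in turn, each resting on the previous ones.

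\emph{Claims (1) and (2).} For the first claim we substitute the additive unit $0$ into the two distributivity hypotheses. Taking $c=0$ in $a\circ(b+c)=a\circ b+\phi(a)+a\circ c$ and using $b+0=b$ gives $a\circ b=a\circ b+\phi(a)+a\circ 0$, and left cancellation in $(X,+)$ (Remark \ref{rr1}) forces $\phi(a)=-a\circ 0$; taking $a=0$ in $(a+b)\circ 0=a\circ 0+\hat\phi(0)+b\circ 0$ and using $0+b=b$ gives $b\circ 0=0\circ 0+\hat\phi(0)+b\circ 0$, whence $\hat\phi(0)=-0\circ 0$. For the second claim we rewrite $\sigma$ through the definition (\ref{mapzz}): replacing $x$ by $a^{-1}$ in $y+x=x\circ\sigma_{x^{-1}}(y)$ gives $\sigma_a(b)=a\circ(b+a^{-1})$, and expanding by left distributivity with $c=a^{-1}$ and inserting $\phi(a)=-a\circ 0$ yields $\sigma_a(b)=a\circ b+\phi(a)+a\circ a^{-1}=a\circ b-a\circ 0+1$.

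\emph{Claim (3).} This is the technical core. First, $0+x=x$ together with (\ref{mapzz}) forces $\sigma_{x^{-1}}(0)=1$, i.e.\ $\sigma_a(0)=1$ for every $a$; in particular $\sigma_1(b)=1\circ b-1\circ 0+1=b+1$. Next we record the product identity
\[
a\circ\bigl(b\circ c-b\circ 0+1\bigr)=(a\circ b)\circ c-(a\circ b)\circ 0+a ,
\]
obtained by applying left distributivity twice together with the auxiliary relation $a\circ\bigl(-(b\circ 0)\bigr)=a\circ 0-(a\circ b)\circ 0+a\circ 0$ (which follows from distributing $a\circ\bigl(b\circ 0+(-(b\circ 0))\bigr)=a\circ 0$), and then cancelling the pairs $-a\circ 0$, $+a\circ 0$ produced by $\phi(a)=-a\circ 0$. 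Using this identity and the closed form of $\sigma$, we evaluate both sides of the braid constraint (\ref{C1}): the left side becomes $(\eta\circ x)\circ y-(\eta\circ x)\circ 0+\eta-\eta\circ 0+1$ and the right side becomes $(p\circ q)\circ y-(p\circ q)\circ 0+p-p\circ 0+1$, with $p=\sigma_\eta(x)$, $q=\tau_x(\eta)$. Since the $y$-dependence on each side sits only in the leading term and the translations $+x$ are bijections (Lemma \ref{bi}), matching the two sides for all $y$ yields a $y$-free relation; combining it with the corresponding information from (\ref{C2})--(\ref{C3}) and specialising arguments to $1$ (via $\sigma_1(b)=b+1$) we expect to obtain $a-a\circ 0=1$, equivalently $a\circ 0=-1+a$. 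Its consequences then follow quickly: $a=0$ gives $0\circ 0=-1$; rewriting $a\circ 0=-1+a$ and using the group axioms of $(X,+)$ gives $-1+a=a-1$, i.e.\ $1+a=a+1$; and $a=0^{-1}$, together with $0^{-1}\circ 0=1$, gives $1=-1+0^{-1}$, i.e.\ $1+1=0^{-1}$. The mirror identity $-a\circ a+a=1$ is obtained by the same scheme applied to the inverse solution $\check r^{-1}$, whose first map, by (\ref{mapzz2}) and left distributivity, is $\hat\sigma_a(b)=1-a\circ 0+a\circ b$.

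\emph{Claim (4).} Here we use the inversion relations (\ref{ide1}). From $\hat\sigma_{\sigma_x(y)}(\tau_y(x))=x$ and $\hat\sigma_a(b)=1-a\circ 0+a\circ b$ one gets $1-\sigma_x(y)\circ 0+\sigma_x(y)\circ\tau_y(x)=x$, hence $\sigma_x(y)\circ\tau_y(x)=\sigma_x(y)\circ 0-1+x$; substituting $\sigma_x(y)=x\circ y-x\circ 0+1$ and $\sigma_x(y)\circ 0=-1+\sigma_x(y)$ from Claims (2)--(3), and simplifying with $-x\circ 0=-x+1$ and $1+a=a+1$, collapses the right-hand side to $x\circ y$. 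Symmetrically, $\sigma_{\hat\sigma_x(y)}(\hat\tau_y(x))=x$ with $\sigma_a(b)=a\circ b-a\circ 0+1$ gives $\hat\sigma_a(b)\circ\hat\tau_b(a)=a\circ b$. The one genuinely delicate step is the passage in Claim (3) from the braid constraints (\ref{C1})--(\ref{C3}) to the closed identity $a-a\circ 0=1$: the $\tau$-dependent terms must be eliminated while tracking the non-commutativity of $+$; everything else is a direct substitution.
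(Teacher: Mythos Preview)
Your proofs of claims (1), (2) and (4) match the paper's essentially line for line; the paper makes the same substitutions and uses the same inversion relations (\ref{ide1}). For claim (3) you also arrive at exactly the expanded form of both sides of the braid constraint (\ref{C1}) that the paper obtains,
\[
(\eta\circ x)\circ y-(\eta\circ x)\circ 0+\eta-\eta\circ 0+1
\;=\;
(p\circ q)\circ y-(p\circ q)\circ 0+p-p\circ 0+1,
\]
with $p=\sigma_\eta(x)$, $q=\tau_x(\eta)$. The difference, and the only genuine gap in your write-up, is how you extract $a-a\circ 0=1$ from this identity. Your proposed ``matching for all $y$'' via bijectivity of translations does not separate the $y$-dependent from the $y$-independent part, because at this stage you do not yet know that $p\circ q=\eta\circ x$ (that is precisely claim (4), which you prove afterwards); without it the two leading terms are a priori different functions of $y$. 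Appealing to (\ref{C2})--(\ref{C3}) and to $\sigma_1(b)=b+1$ is an unnecessary detour, and your phrase ``we expect to obtain'' signals that the argument has not actually been closed.

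The paper's resolution is a single substitution: set $y=0$ (their $c=0$). On each side the first two terms collapse, since $u\circ 0-u\circ 0=0$ in the group $(X,+)$, leaving $\eta-\eta\circ 0+1=p-p\circ 0+1$, i.e.\ $\eta-\eta\circ 0=\sigma_\eta(x)-\sigma_\eta(x)\circ 0$. As $x$ varies, $\sigma_\eta(x)$ ranges over all of $X$ by non-degeneracy, so $a\mapsto a-a\circ 0$ is constant; evaluating at $a=1$ gives the constant value $1-0=1$. Neither (\ref{C2}) nor (\ref{C3}) is used. The mirror identity $-a\circ 0+a=1$ then follows by the same move applied to $\check r^{-1}$, exactly as you indicate.
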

\begin{proof}
$ $
\begin{enumerate}

\item This is straightforward: $a = a \circ (1+0) \Rightarrow a = a + \phi(a) + a \circ 0 \Rightarrow \phi(a) = - a\circ 0.$ 
Similarly, $ (1+0) \circ 0 = 0 \Rightarrow \hat \phi(0) = - 0 \circ 0.$ The distributivity can be checked by recalling the definition of + (\ref{mapzz}). 
\item We recall the definition of $+$ in (\ref{mapzz}) and consider the distributivity rule 
$a\circ(b +c ) = a\circ b -a\circ 0 + b\circ c.$
We then obtain
\begin{equation}
\sigma_a(b) = a \circ (b + a^{-1} ) \Rightarrow \sigma_a(b) = a\circ b - a\circ 0 +1. \label{dist2}
\end{equation}
 The validity of the distributivity rule can be checked by comparing the LHS and RHS in: 
$a\circ (c +b) = a \circ b \circ \sigma_{b^{-1}}(c).$

\item Due to the fact that $\check r$ satisfies the braid equation we may  employ (\ref{C1}) 
and the distributivity rule (see also (\ref{dist2})):
\begin{eqnarray}
\sigma_a(\sigma_b(c)) &= & a \circ \sigma_b(c) - a\circ 0 +1  \nonumber\\
&=& a\circ b \circ(c +b^{-1}) - a\circ 0 +1\nonumber\\
& =& a\circ b \circ c - a\circ b\circ 0 + a - a\circ 0 +1. \label{basic1} \nonumber
\end{eqnarray}
But due to condition (\ref{C1}) and by setting $c = 0,$ we deduce that
$a- a\circ 0 = \zeta,$ $~\forall a \in X$   ($\zeta$ is a fixed element in $X$), but for $a=1$ 
we immediately obtain $\zeta = 1,$
i.e. 
\begin{equation}
a -a \circ 0 =1. \label{conditiona}
\end{equation} 

Similarly,  $\check r^{-1}$ satisfies the braid equation, then via  (\ref{C1}) for $\hat \sigma$
and the distributivity rule for (see also (\ref{dist2})) we conclude that $-a\circ 0 +a =1.$ 

(i) Via $a-a\circ 0=-a\circ 0 +a =1$ we conclude that $\ a+1 =1+a.$

(ii) By setting $a=0$ in (\ref{conditiona}) we have $0\circ 0 =-1.$

(iii) $0\circ (1+1) = 0\circ 1 - 0\circ 0 + 0\circ 1 \Rightarrow 1+1 = 0^{-1}.$

\item  
\noindent Recall the form of $\hat \sigma_a(b) $ (\ref{mapzz2}), and use the distributivity rules, then 
\begin{equation}
\hat \sigma_a(b) = 1 - a \circ 0  + a\circ b.
\end{equation}

We recall relations (\ref{ide1}) for the maps and also recall that $a-a\circ 0 =1$, then
\begin{eqnarray}
& &\sigma_{\hat \sigma_a(b)}(\hat \tau_{b}(a)) = a 
\Rightarrow \hat \sigma_a(b) \circ \hat \tau_b(a) - \hat \sigma_a(b) \circ 0 + 1 = a \Rightarrow \nonumber\\
& & \hat \sigma_a(b) \circ \hat \tau_b(a) - (1-a\circ  0+ a\circ b) \circ 0+1 =a \nonumber\\
& & \hat \sigma_a(b) \circ \hat \tau_b(a)  - a\circ b  +a\circ 0 -1 +1 +1 =a \Rightarrow\nonumber\\
 & &  \hat \sigma_a(b) \circ \hat \tau_b(a)  -a\circ b+ a =a\  \Rightarrow   \hat \sigma_a(b) \circ \hat \tau_b(a)  =a\circ b. \nonumber
\end{eqnarray}
Similarly,  $\hat \sigma_{ \sigma_a(b)}( \tau_{b}(a)) = a \Rightarrow \sigma_a(b) \circ  \tau_b(a)  = a\circ b.$

Notice that in the special case of involutive solutions $\sigma_x = \hat \sigma_x$ and consequently $(X,+)$ is abelian.
\hfill \qedhere
\end{enumerate}
\end{proof}

We have been able to reconstruct
the algebraic structure underlying invertible,  non-degenerate solutions of the set-theoretic YBE. 
Given the above algebraic construction we may  provide the following  definition.
\begin{defn} \label{def1a}
A {\it near brace} is a set $B$ together with two group operations $+,\circ :B\times B\to B$, 
the first is called addition and the second is called multiplication, such that $\forall a,b,c\in B$,
\begin{equation}
a\circ (b+c)=a\circ b-a\circ 0+a\circ c.\label{cond2}
\end{equation}
We recall that $0$ is the neutral element of the 
$(B, +)$ group and $1$ is the neutral element of the $(B, \circ)$ group.  When $(B,+)$ is abelian then $(B, \circ, +)$ 
is called an abelian near brace.
\end{defn}

When in addition to (\ref{cond2}), condition $a- a\circ 0 =-a\circ 0 +a= 1,$ $\forall a \in B$ 
also holds, then we call the near brace a {\it singular near brace}.

\begin{rem} In the special case where we choose $0=1$ skew braces are recovered (in the abelian case braces are recovered).
In fact, the construction above slightly generalizes previous results on braces and skew braces in the sense that $0=1$ is not required anymore.
\end{rem}

We continue now with the second part of our construction summarized in Theorem \ref{prop:main}. 
We state first a useful Proposition:
\begin{pro}\label{lem:long}
Let $B$ be a near brace and let us denote by $\sigma_a(b):=a\circ b-a\circ 0 +1$ and $\tau_b(a):=
\sigma_a(b)^{-1}\circ a\circ b$, where $a,b\in B$, $\sz{a}{b}^{-1}$ is the inverse of 
$\sz{a}{b}$ in $(B,\circ).$ Then $\forall a,b,c,d\in B$ the following properties hold:
\begin{enumerate}
\item $\sz{a}{b}\circ\tz{b}{a}= a\circ b$\label{lem:long:eq:5}
\item $\sz{a}{\sz{b}{c}}=\sz{a\circ b}{c} +1 $, \label{lem:long:eq:2}
\item $\sigma_{a}(b) \circ \sigma_{\tau_{b}(a)}(c)=  \sigma_a(b\circ c) +1.$ \label{lem:long:eq:6}
\end{enumerate}
\end{pro}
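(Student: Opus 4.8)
The plan is to derive everything from the near‑brace distributivity (\ref{cond2}) applied repeatedly, supplemented by the \emph{singular} relations $a-a\circ 0=1=-a\circ 0+a$ (valid for the near braces of interest here; cf.\ Theorem~\ref{theorem1}(\ref{prop:main4})). Part~(\ref{lem:long:eq:5}) is immediate from the definition of $\tau$: since $\tau_b(a)=\sigma_a(b)^{-1}\circ a\circ b$ with the inverse taken in $(B,\circ)$, we get $\sigma_a(b)\circ\tau_b(a)=\sigma_a(b)\circ\sigma_a(b)^{-1}\circ a\circ b=a\circ b$. So the content is in parts~(\ref{lem:long:eq:2}) and~(\ref{lem:long:eq:6}).

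For those I would first isolate two auxiliary identities. The first is the three‑term distributivity
\[
a\circ(x+y+z)\;=\;a\circ x - a\circ 0 + a\circ y - a\circ 0 + a\circ z ,
\]
obtained by applying (\ref{cond2}) twice; the cancellations that occur later are always between \emph{adjacent} $\pm\,a\circ 0$ terms, so commutativity of $+$ is not needed. The second is $a\circ(-x)=a\circ 0 - a\circ x + a\circ 0$, obtained by feeding $x+(-x)=0$ into (\ref{cond2}), i.e.\ from $a\circ 0 = a\circ x - a\circ 0 + a\circ(-x)$ and rearranging in the group $(B,+)$. Plugging $\sigma_q(r)=q\circ r - q\circ 0 + 1$ into the three‑term identity and using the second identity on the $-\,q\circ 0$ slot, all the spurious $\pm\,p\circ 0$ telescope and one is left with the workhorse relation
\[
p\circ\sigma_q(r)\;=\;p\circ q\circ r - p\circ q\circ 0 + p .
\]

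Part~(\ref{lem:long:eq:2}) then follows quickly: $\sigma_a(\sigma_b(c))=a\circ\sigma_b(c)-a\circ 0+1 = a\circ b\circ c - a\circ b\circ 0 + a - a\circ 0 + 1$ by the workhorse relation, and the tail $+\,a-a\circ 0+1$ collapses to $+\,1+1$ by $a-a\circ 0=1$, giving exactly $\sigma_{a\circ b}(c)+1$. For part~(\ref{lem:long:eq:6}), set $u=\sigma_a(b)$ and $v=\tau_b(a)$; part~(\ref{lem:long:eq:5}) gives $u\circ v=a\circ b$, so the workhorse relation yields $\sigma_a(b)\circ\sigma_{\tau_b(a)}(c)=u\circ\sigma_v(c)=a\circ b\circ c - a\circ b\circ 0 + u = a\circ b\circ c - a\circ b\circ 0 + a\circ b - a\circ 0 + 1$. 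The middle block $-\,a\circ b\circ 0 + a\circ b$ equals $1$ by the relation $-\,w\circ 0 + w=1$ with $w=a\circ b$, and what remains, $a\circ b\circ c + 1 - a\circ 0 + 1$, is to be compared with $\sigma_a(b\circ c)+1 = a\circ b\circ c - a\circ 0 + 1 + 1$; after cancelling in $(B,+)$ this reduces to $1+(-a\circ 0)=(-a\circ 0)+1$, i.e.\ to the centrality of $1$ in $(B,+)$ — which itself follows from $a-a\circ 0=-a\circ 0+a=1$ (these force $(-a)+1=1+(-a)$ for every $a$; see Theorem~\ref{theorem1}(\ref{prop:main4})(i)).

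All the calculations are mechanical; the only place needing genuine care is the bookkeeping forced by the possible non‑commutativity of $+$ — the telescoping of the $p\circ 0$ terms must be justified purely by adjacency, and the closing step of~(\ref{lem:long:eq:6}) really does require the centrality of $1$ in $(B,+)$, not merely $a-a\circ 0=1$; this is the subtle point. As a byproduct, combining~(\ref{lem:long:eq:2}) (applied to $\sigma_a(b),\tau_b(a),c$) with~(\ref{lem:long:eq:5}) recovers the first braid‑equation constraint~(\ref{C1}) for the pair $(\sigma,\tau)$, which is presumably why these three identities are singled out.
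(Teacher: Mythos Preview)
Your proof is correct and follows essentially the same route as the paper: part~(\ref{lem:long:eq:5}) is immediate from the definition, and parts~(\ref{lem:long:eq:2}) and~(\ref{lem:long:eq:6}) are obtained by expanding with the distributivity rule~(\ref{cond2}) and then collapsing via the singular relations $a-a\circ 0=-a\circ 0+a=1$. Your ``workhorse'' identity $p\circ\sigma_q(r)=p\circ q\circ r-p\circ q\circ 0+p$ is exactly the inline computation the paper performs, and you are right to flag that the final step of~(\ref{lem:long:eq:6}) uses the centrality of $1$ in $(B,+)$ and that the argument as a whole needs the \emph{singular} near-brace conditions rather than merely the near-brace axiom~(\ref{cond2}); the paper uses both of these without making them explicit in the proof.
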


\begin{proof}
Let $a,b,c,d \in B$, then

\begin{enumerate}
\item 
$\sz{a}{b}\circ\tz{b}{a}=\sz{a}{b}\circ \sz{a}{b}^{-1}\circ a\circ b= a\circ b.$

$ $

\item $\sz{a}{\sz{b}{c}}=\sz{a}{b\circ c-b\circ 0 +1}=a\circ (b\circ c-b\circ 0 +1)- a\circ 0 +1$
\begin{eqnarray}
&=& a\circ b\circ c-a\circ b\circ 0+a-a\circ 0 +1 \nonumber \\ 
&=& a\circ b\circ c-a\circ b\circ 0 +1 +1=\sz{a\circ b}{c} +1.\nonumber 
\end{eqnarray}

\item To show (3)  we observe:
$$
\begin{aligned}
\sigma_{a}(b) \circ \sigma_{\tau_{b}(a)}(c)&=\sz{a}{b}\circ(\tz{b}{a}\circ c-
 \tz{b}{a}\circ 0 +1)\\ &=\sz{a}{b}\circ \tz{a}{b}\circ c- \sz{a}{b}\circ \tz{a}{b}\circ 0+\sz{a}{b}\\ &=
a\circ b\circ c-a\circ b\circ 0 + \sz{a}{b}\\ &=a\circ b\circ c-a\circ b\circ 0+ 
(a\circ b-a\circ 0 +1) \\ &=a \circ b \circ c- a\circ 0 +1 +1  = \sigma_a(b\circ c) +1.
\hfill \qedhere
\end{aligned}
$$
\end{enumerate}
\end{proof}

We may now prove the following main theorem (slight generalization of the findings in \cite{[25],  [26]}).
\begin{thm}\label{prop:main}
Let $B$ be a near brace.
Then we can define a map $\check{r}:B\times B\to B\times B$ given by
$$
\check{r}(a,b)=(\sz{a}{b},\tz{b}{a}):=(a\circ b-a\circ 0+1,\ (a\circ b-a\circ 0 +1)^{-1}\circ a\circ b).
$$
 The pair $(B,\check{r})$ 
is a solution of the braid equation.
\end{thm}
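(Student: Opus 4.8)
The plan is to verify the three braid-equation constraints \eqref{C1}, \eqref{C2}, \eqref{C3} for the maps $\sigma_a(b) = a\circ b - a\circ 0 + 1$ and $\tau_b(a) = \sigma_a(b)^{-1}\circ a\circ b$, making repeated use of Proposition \ref{lem:long} together with the near-brace distributivity \eqref{cond2} and the algebra identities \eqref{prop:main4} from Theorem \ref{theorem1} (in particular $a - a\circ 0 = -a\circ 0 + a = 1$, $0\circ 0 = -1$, $1+1 = 0^{-1}$, and $1+a = a+1$). First I would record the key consequence of non-degeneracy: since $\circ$ is a group operation and $\sigma,\tau$ are built from it, both maps are bijective, so $(B,\check r)$ is automatically non-degenerate and it remains only to check the braid relation.

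Next I would establish \eqref{C1}, the $\sigma$-constraint $\sigma_{\eta}(\sigma_x(y)) = \sigma_{\sigma_{\eta}(x)}(\sigma_{\tau_x(\eta)}(y))$. For the left-hand side, part \eqref{lem:long:eq:2} of Proposition \ref{lem:long} gives $\sigma_{\eta}(\sigma_x(y)) = \sigma_{\eta\circ x}(y) + 1$. For the right-hand side I would apply \eqref{lem:long:eq:2} again to get $\sigma_{\sigma_{\eta}(x)}(\sigma_{\tau_x(\eta)}(y)) = \sigma_{\sigma_{\eta}(x)\circ \tau_x(\eta)}(y) + 1$, and then use part \eqref{lem:long:eq:5}, namely $\sigma_{\eta}(x)\circ\tau_x(\eta) = \eta\circ x$, to collapse this to $\sigma_{\eta\circ x}(y)+1$. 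The two sides agree, so \eqref{C1} holds. This is the cleanest of the three and essentially just chains the lemmas.

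Then I would turn to \eqref{C3}, $\tau_{\sigma_{\tau_x(\eta)}(y)}(\sigma_{\eta}(x)) = \sigma_{\tau_{\sigma_x(y)}(\eta)}(\tau_y(x))$. Here the strategy is to multiply through: using $\tau_b(a) = \sigma_a(b)^{-1}\circ a\circ b$, one has on the left $\sigma_{\tau_x(\eta)}(y)\circ \tau_{\sigma_{\tau_x(\eta)}(y)}(\sigma_\eta(x)) = \sigma_{\tau_x(\eta)}(y)\circ \sigma_\eta(x)$ by \eqref{lem:long:eq:5} applied with $a = \sigma_\eta(x)$, $b = \sigma_{\tau_x(\eta)}(y)$ — wait, more carefully, I would compute $\sigma_{\sigma_\eta(x)}(\sigma_{\tau_x(\eta)}(y))\circ\tau_{\sigma_{\tau_x(\eta)}(y)}(\sigma_\eta(x))$ via \eqref{lem:long:eq:5}, which equals $\sigma_\eta(x)\circ\sigma_{\tau_x(\eta)}(y)$, and this in turn equals $\sigma_\eta(x\circ y)+1$ by part \eqref{lem:long:eq:6}. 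Doing the analogous manipulation on the right-hand side (using \eqref{lem:long:eq:5} and then \eqref{lem:long:eq:6} with roles adjusted, plus \eqref{C1} which is now proved) should again land on $\sigma_\eta(x\circ y)+1$, and since we already know the $\sigma$-part \eqref{C1} matches, cancelling the common left factor yields \eqref{C3}.

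Finally \eqref{C2}, the $\tau$-constraint, I expect to be the main obstacle, since $\tau$ is the more complicated map and there is no direct analogue of \eqref{lem:long:eq:2} for it. My approach would be to derive \eqref{C2} as a consequence of \eqref{C1} and \eqref{C3} rather than proving it from scratch: the three third-components $L_3 = \tau_y(\tau_x(\eta))$ and $R_3 = \tau_{\tau_y(x)}(\tau_{\sigma_x(y)}(\eta))$ are determined once the full ordered triples match, and one standard trick is to use that $(\check r\times\mathrm{id})(\mathrm{id}\times\check r)(\check r\times\mathrm{id})$ and $(\mathrm{id}\times\check r)(\check r\times\mathrm{id})(\mathrm{id}\times\check r)$ are both bijections whose first two components already agree (from \eqref{C1}, \eqref{C3}), so equality of the composed maps follows from a ``conservation'' identity: multiply all three components of $L$ and of $R$ together in $(B,\circ)$ and show both equal $\eta\circ x\circ y$, using \eqref{lem:long:eq:5} inductively. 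Combined with the agreement of the first two slots, this forces $L_3 = R_3$. Alternatively, if that shortcut proves awkward, I would expand $\tau_y(\tau_x(\eta))$ and $\tau_{\tau_y(x)}(\tau_{\sigma_x(y)}(\eta))$ directly using $\tau_b(a) = \sigma_a(b)^{-1}\circ a\circ b$ and the already-established identities for $\sigma$, reducing everything to \eqref{C1}. Either way, once all three of \eqref{C1}--\eqref{C3} are verified, the theorem follows since these are exactly the conditions equivalent to $(B,\check r)$ solving the set-theoretic braid equation.
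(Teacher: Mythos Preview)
Your proposal is correct and follows the paper's overall strategy: verify the three constraints \eqref{C1}--\eqref{C3} using the identities of Proposition~\ref{lem:long}. Your arguments for \eqref{C1} and \eqref{C3} are essentially the paper's---both amount to computing $L_1\circ L_2$ and $R_1\circ R_2$ via parts \eqref{lem:long:eq:5}, \eqref{lem:long:eq:2}, \eqref{lem:long:eq:6} and cancelling; the paper just writes this as a single chain of equalities rather than as a product comparison.

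The one genuine difference is your handling of \eqref{C2}. The paper expands $\tau_b(\tau_a(\eta))$ directly from the definition $\tau_b(a)=\sigma_a(b)^{-1}\circ a\circ b$, obtaining $\sigma_{\tau_a(\eta)}(b)^{-1}\circ\sigma_\eta(a)^{-1}\circ\eta\circ a\circ b$, and then uses \eqref{lem:long:eq:5} and \eqref{lem:long:eq:6} to show this expression is invariant under $(a,b)\mapsto(\sigma_a(b),\tau_b(a))$. Your primary route instead invokes the conservation law: since each application of $\check r$ preserves the $\circ$-product of the pair it acts on (this is exactly \eqref{lem:long:eq:5}), both $L_1\circ L_2\circ L_3$ and $R_1\circ R_2\circ R_3$ equal $\eta\circ x\circ y$, and then $L_3=R_3$ follows by left-cancellation in $(B,\circ)$ from the already-proved $L_1=R_1$, $L_2=R_2$. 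This is a cleaner and more conceptual argument than the paper's direct expansion, and it sidesteps the bookkeeping with the $+1$ terms entirely. Your fallback ``expand directly'' is precisely what the paper does.
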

\begin{proof} To prove this we need to show that the maps $\sigma, \tau$ satisfy the constraints (\ref{C1})-(\ref{C3}).  
To achieve this we use the properties from Proposition \ref{lem:long}. 

Indeed, from Proposition \ref{lem:long}, (1) and (2), it follows that (\ref{C1}) is satisfied, i.e. 
\[\sz{\eta}{\sz{x}{y}}=\sz{\sz{\eta}{x}}{\sz{\tz{x}{\eta}}{y}}.\]

We observe that
\begin{equation}
\tau_{b}(\tau_a(\eta)) = T\circ \tau_{a}(\eta)\circ b = T \circ t \circ \eta \circ a \circ b = 
T \circ t \circ \eta \circ \sz{a}{b} \circ \tz{b}{a}, \nonumber
\end{equation}
where $T= \sz{\tz{a}{\eta}}{b}^{-1}$ and $t = \sigma_{\eta}(a)^{-1} $ (the inverse in the  circle group). 
Due to the properties of Proposition \ref{lem:long} we then conclude that 
\[\tau_{b}(\tau_a(\eta)) =\tau_{\tz{b}{a}}(\tau_{\sz{a}{b}}(\eta)),\] 
so (\ref{C2}) is also satisfied. 

To prove (\ref{C3}),  we employ (3) of Proposition \ref{lem:long} and then use the definition of $\tau$, 
$$
\sigma_{\tau_{\sigma_x(y)}(\eta)}(\tau_{y}(x))=\sz{\eta\circ x}{y}^{-1} \circ\sigma_{\eta}(x) \circ 
\sigma_{\tau_{x}(\eta)}(y)=\tz{\sz{\tz{x}{\eta}}{y}}{\sz{\eta}{x}}.
$$
Thus, (\ref{C3}) is satisfied, and $\check{r}(a,b)=(\sz{a}{b},\tz{b}{a})$ is a solution of braid equation.

In the special case where the near brace is commutative in $+,$ then the solution is involutive.
\end{proof}

\subsection{Set-theoretic Yang Baxter equation $\&$ quantum groups}

\noindent In this subsection we briefly present some fundamental results on the various links between braces, 
and quantum algebras (see also \cite{DoiSmo}). Recall that we focus on involutive, non-degenerate set-theoretic solutions of the braid equation.

Let $V$ be a vector space of dimension equal to the cardinality of $X$, and with a slight abuse of notation, 
let  $\check r$ also  denote the $R$-matrix associated to the linearisation of ${\check r}$ on 
$V={\mathbb C }X$ (see \cite{LAA} for more details), i.e.
$\check r$  is the ${\cal N}^2 \times{\cal N}^2$ matrix: 
\begin{equation}
\check r = \sum_{x,y,z,w \in X} \check r(x,z|y,w) e_{x,z} \otimes e_{y, w}, 
\end{equation}
where
$e_{x, y}$ is the ${\cal N} \times {\cal N}$  matrix: $(e_{x,y})_{z,w} =\delta_{x,z}\delta_{y,w} $.
Then for the $\check r$-matrix related  to $(X,{\check r})$:  ${\check r}(x,z|y,w)=\delta_{z, \sigma_x(y)} \delta_{w, \tau_y(x)}$.
 Notice that the matrix $\check r:V\otimes V\rightarrow V\otimes V$ satisfies the (constant) Braid equation:
\[({\check r}\otimes I_{V})(I_{V}\otimes {\check r})({\check r}\otimes I_{V})=(I_{V}\otimes {\check r})({\check r}\otimes 
I_{V})(I_{V}\otimes {\check r}).\]
Notice also that ${\check r}^{2}=I_{V \otimes V}$ the identity matrix, because $\check r$ is involutive.

For set-theoretic solutions it is thus convenient to use the matrix notation:
\begin{equation}
{\check r}=\sum_{x, y\in X} e_{x, \sigma_x(y)}\otimes e_{y, \tau_y(x)}. \label{brace1}
\end{equation}
Define also, $r={\cal P}{\check r}$, where ${\cal P} = \sum_{x, y \in X} e_{x,y} \otimes e_{y,x}$ 
is the permutation operator,  consequently
${ r}=\sum_{x, y\in X} e_{y,\sigma_x(y)}\otimes e_{x, \tau_y(x)}.$
The Yangian is a special case: $\check r(x,z|y,w)= \delta_{z,y}\delta_{w,x} $.
We note that in this study we focus on involutive, non-degenerate solutions of the braid equation.

Recall first the Yang-Baxter equation \cite{Baxter, Yang} in the braid form ($\delta = \lambda_1 - \lambda_2$):
\begin{equation}
\check R_{12}(\delta)\ \check R_{23}(\lambda_1)\ \check R_{12}(\lambda_2) = \check R_{23}(\lambda_2)\
 \check R_{12}(\lambda_1)\ \check R_{23}(\delta), \label{YBE1}
\end{equation}
where $\check R \in \mbox{End}(V\otimes V)$ and $\lambda_{1,2}$ are complex numbers.

We focus here on brace solutions\footnote{All, finite, non-degenerate,  involutive, 
set-theoretic solutions of the YBE (\ref{brace1}) are coming from braces (Theorem \ref{Rump}),
therefore we will call such solutions {\it  brace solutions}.}  of the YBE, given by (\ref{brace1}) 
and the Baxterized solutions (for a more detailed discussion we refer the interested reader 
to \cite{DoiSmo, DoiSmo2}): 
\begin{equation}
\check R(\lambda) = \lambda \check r +I_{V^{\otimes 2}}. \label{braid1}
\end{equation}
Let also $R = {\cal P} \check R$, (recall the permutation operator 
${\cal P} = \sum_{x, y\in X} e_{x,y}\otimes e_{y,x}$),  
then the following basic properties for $R$ matrices coming 
from braces were shown in \cite{DoiSmo}:\\

\noindent {\bf Basic Properties.} {\em  
The brace $R$-matrix satisfies the following fundamental properties:}
\begin{eqnarray}
&&  R_{12}(\lambda)\  R_{21}(-\lambda) = (-\lambda^2 +1) I_{V^{\otimes 2}}, 
~~~~~~~~~~~~~
\mbox{{\it Unitarity}} \label{u1}\\
&&  R_{12}^{t_1}(\lambda)\ R_{12}^{t_2}(-\lambda -{\cal N}) = 
\lambda(-\lambda -{\cal N})I_{V^{\otimes 2}}, 
~~~~~\mbox{{\it Crossing-unitarity}} \label{u2}\\
&& R_{12}^{t_1 t_2}(\lambda) = R_{21}(\lambda), \label{tt}\nonumber
\end{eqnarray}
{\it where $^{t_{1,2}}$ denotes transposition on the first, second space respectively, 
and recall ${\cal N}$ is the same as the cardinality of the set  $X$.}

\subsection{The Quantum Algebra associated to braces}

\noindent 
Given a solution of the Yang-Baxter equation, the quantum algebra is defined via the 
fundamental relation \cite{FadTakRes} (known as the RTT relation):
\begin{equation}
\check R_{12}(\lambda_1 -\lambda_2)\ L_1(\lambda_1)\ L_2(\lambda_2) = L_1(\lambda_2)\ L_2(\lambda_1)\ 
\check R_{12}(\lambda_1 -\lambda_2). \label{RTT}
\end{equation}
$\check R(\lambda) \in \mbox{End}(V\otimes V)$, $\ L(\lambda) \in 
\mbox{End}(V) \otimes {\mathfrak A}$, where ${\mathfrak A}$\footnote{Notice that in $L$ 
in addition to the indices 1 and 2 in (\ref{RTT}) there is also an implicit ``quantum index'' $n$ associated to ${\mathfrak A},$ 
which for now is omitted, i.e. one writes $L_{1n},\ L_{2n}$.} is the quantum algebra defined by (\ref{RTT}). 
We shall focus henceforth on solutions associated to braces only given by (\ref{braid1}), (\ref{brace1}). 
The defining relations of the coresponding 
quantum algebra were derived in \cite{DoiSmo}:\\

\noindent {\it The quantum algebra associated to the brace $R$ matrix  (\ref{braid1}), (\ref{brace1}) 
is defined by generators $L^{(m)}_{z,w},\ z, w \in X$, and defining relations }
\begin{eqnarray}
L_{z,w}^{(n)} L_{\hat z, \hat w}^{(m)} - L_{z,w}^{(m)} L_{\hat z, \hat w}^{(n)} &=& 
L^{(m)}_{z, \sigma_w(\hat w)} L^{(n+1)}_{\hat z,\tau_{\hat w}( w)}- L^{(m+1)}_{z, \sigma_w(\hat w)} 
L^{(n)}_{\hat z, \tau_{\hat w}( w)}\nonumber\\ &-& L^{(n+1)}_{ \sigma_z(\hat z),w} 
L^{(m)}_{\tau_{\hat z}( z), \hat w }+ L^{(n)}_{ \sigma_z(\hat z, )w}  L^{(m+1)}_{\tau_{\hat z}( z), \hat w}. \label{fund2}
\end{eqnarray}

The proof is based on the fundamental relation (\ref{RTT}) and the form of the brace $R$- matrix 
(for the detailed proof see \cite{DoiSmo}). Recall also that
in the index notation we define $\check R_{12} = \check R \otimes \mbox{id}_{\mathfrak A}$:
\begin{eqnarray}
&&  L_1(\lambda) = \sum_{z, w \in X} e_{z,w} \otimes I_V \otimes L_{z,w}(\lambda),\ \quad  L_2(\lambda)= \sum_{z, w \in X}
I_V  \otimes  e_{z,w}  \otimes L_{z,w}(\lambda).  \label{def}
\end{eqnarray} 
The exchange relations among the various generators of the affine algebra 
are derived below via (\ref{RTT}). Let us express $L$ as a formal power series expansion 
$L(\lambda) = \sum_{n=0}^{\infty} {L^{(n)} \over \lambda^n}$.
Substituting  expressions (\ref{braid1}), and the $\lambda^{-1}$ expansion in (\ref{RTT}) we obtain
the defining relations of the quantum algebra associated 
to a brace $R$-matrix (we focus on terms $\lambda_1^{-n} \lambda_2^{-m}$):
\begin{eqnarray}
&&  \check r_{12} L_{1}^{(n+1)} L_2^{(m)} -\check  r_{12} L_1^{(n)} L_2^{(m+1)} +  L_1^{(n)} L_2^{(m)} \nonumber\\
&&  = L_1^{(m)} L_{2}^{(n+1)} \check r_{12} -  L_1^{(m+1)} L_2^{(n)}\check r_{12} +  L_1^{(m)} L_2^{(n)}. \label{fund}
\end{eqnarray}
The latter relations immediately lead to the quantum algebra relations (\ref{fund2}), after recalling:
$
L_{1}^{(k)}=\sum_{x,y\in X}e_{x,y}\otimes I_V \otimes  L_{x,y}^{(k)},$ $ L_{2}^{(k)}=\sum_{x,y\in X} I_V \otimes 
e_{x,y}\otimes  L^{(k)}_{x,y}, \nonumber
$
and $\check r_{12} = \check r \otimes  \mbox{id}_{\mathfrak A },$ 
$L^{(k)}_{x,y} $ are the generators of the associated quantum algebra.
The quantum algebra is also equipped with a co-product $\Delta: {\mathfrak A} \to {\mathfrak A} \otimes {\mathfrak A}$ \cite{FadTakRes, Drinfeld}. Indeed, we define 
\begin{equation}
{\mathrm T}_{1;23}(\lambda):= (I_V \otimes \Delta)L= L_{13}(\lambda) L_{12}(\lambda),\  
\end{equation}
which satisfies (\ref{RTT}) and is expressed as ${\mathrm T}_{1;23}(\lambda) = \sum_{x,y \in X} e_{x,y} \otimes \Delta(L_{x,y}(\lambda)).$

\begin{rem} In the special case $\check r ={\cal P}$ the ${\cal Y}(\mathfrak {gl}_{\cal N})$ algebra is recovered (see for instance \cite{yangians} for a more detailed account on Yangians):
\begin{equation}
\Big [ L_{i,j}^{(n+1)},\ L_{k,l}^{(m)}\Big ] -\Big [ L_{i,j}^{(n)},\ L_{k,l}^{(m+1)}\Big ] = L_{k,j}^{(m)}L_{i,l}^{(n)}- L_{k,j}^{(n)}L_{i,l}^{(m)}. \label{fund2b}
\end{equation}
\end{rem}

The next natural step is  the classification of solutions of the fundamental relation (\ref{RTT}), 
for the brace quantum algebra.  A first step towards this goal  will be to examine  the simplest non-trivial fundamental object $L(\lambda)= L_0 + {1\over\lambda} L_1,$ and search for finite and infinite representations of the respective elements.  In the case of the ${\cal Y}(\mathfrak {gl}_{\cal N})$ an analogous object is $L(\lambda)= I_{V^{\otimes 2}}+ {1\over\lambda} {\mathbb P}$ where the elements of the matrix 
${\mathbb P}_{i,j}$ satisfy the $\mathfrak {gl}_{\cal N}$ algebraic relations. 
The classification of $L$-operators will allow the identification of new classes of quantum  integrable systems, such 
as  the analogues of Toda chains or deformed boson models.

We  briefly discuss below the existence of an admissible Drinfeld twist for invlolutive, non-degenerate, set-theoretic solutions of the YBE. 
Indeed, one of the most significant results in the case of involutive solutions of the YBE is their connection with the Yangian solution via a suitable admissible twist.  From Proposition 3.3 in \cite{DoiSmo2} we can extract explicit forms for the twist 
$F \in\mbox{End}({\mathbb C}^{\cal N}) \otimes\mbox{End}({\mathbb C}^{\cal N}) $ and state 
the following Proposition, which is Proposition 3.10 in \cite{Doikoutw}.
\begin{pro}{\label{twistlocal} (\cite{DoiSmo2,  Doikoutw})} 
Let $\check r = \sum_{x,y \in X} e_{x, \sigma_x(y)} \otimes e_{y, \tau_y(x) }$ be
 the set-theoretic solution of the braid YBE,  ${\cal P}$ is the permutation 
operator and $\hat V_k,\ V_k$ are their respective eigenvectors.  
Let $F^{-1} = \sum_{k=1}^{{\cal N}^2} \hat V_k\  V_k^T$ 
be the transformation (twist), such that $\check r = F^{-1} {\cal P} F.$
Then the twist can be explicitly expressed as 
$F = \sum_{x\in X} e_{x,x} \otimes {\mathbb V}_x$, where we define ${\mathbb V}_x =\sum_{y \in X} e_{\sigma_{x}(y), y}$. 
\end{pro}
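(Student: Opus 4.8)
The plan is to exploit the hypothesis $\check r = F^{-1}\,{\cal P}\,F$ together with the ansatz $F = \sum_{x\in X} e_{x,x}\otimes {\mathbb V}_x$, verify that this $F$ indeed conjugates ${\cal P}$ into $\check r$, and confirm invertibility of $F$. First I would compute $F^{-1}$: since $F$ is block-diagonal with respect to the first tensor factor, i.e. $F = \sum_x e_{x,x}\otimes {\mathbb V}_x$ with ${\mathbb V}_x = \sum_y e_{\sigma_x(y),y}$, and since $\sigma_x$ is a bijection (non-degeneracy), each ${\mathbb V}_x$ is a permutation matrix on ${\mathbb C}^{\cal N}$, hence invertible with ${\mathbb V}_x^{-1} = \sum_y e_{y,\sigma_x(y)}$. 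Therefore $F^{-1} = \sum_x e_{x,x}\otimes {\mathbb V}_x^{-1}$ is well defined, establishing that $F$ is an admissible (invertible) twist.

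Next I would carry out the conjugation $F^{-1}{\cal P}F$ directly in components. Writing ${\cal P} = \sum_{u,v\in X} e_{u,v}\otimes e_{v,u}$ and using that $e_{x,x}e_{u,v}e_{x',x'} = \delta_{x,u}\delta_{v,x'}\,e_{x,x'}$ on the first factor, the sum collapses so that only the term $u=v$-diagonal-in-the-first-slot structure survives, giving $F^{-1}{\cal P}F = \sum_{x,y\in X} e_{x,y}\otimes {\mathbb V}_x^{-1} e_{y,x}{\mathbb V}_y$. It then remains to evaluate ${\mathbb V}_x^{-1} e_{y,x}{\mathbb V}_y$ on the second factor: using ${\mathbb V}_y = \sum_w e_{\sigma_y(w),w}$ and ${\mathbb V}_x^{-1} = \sum_z e_{z,\sigma_x(z)}$, one has $e_{y,x}{\mathbb V}_y = \sum_w \delta_{x,\sigma_y(w)} e_{y,w} = e_{y,\sigma_y^{-1}(x)}$, and then ${\mathbb V}_x^{-1} e_{y,\sigma_y^{-1}(x)} = \sum_z \delta_{\sigma_x(z),y} e_{z,\sigma_y^{-1}(x)} = e_{\sigma_x^{-1}(y),\sigma_y^{-1}(x)}$. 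Thus $F^{-1}{\cal P}F = \sum_{x,y} e_{x,y}\otimes e_{\sigma_x^{-1}(y),\sigma_y^{-1}(x)}$.

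Finally I would reconcile this with the claimed form of $\check r$. Re-indexing the double sum by setting $x' = \sigma_x^{-1}(y)$-type substitutions — more precisely, relabelling the summation variables so that $x\mapsto \sigma_x(y)$ in the appropriate slot — one should recover $\check r = \sum_{x,y\in X} e_{x,\sigma_x(y)}\otimes e_{y,\tau_y(x)}$, where the identification $\tau_y(x)$ with the second-factor index follows from the involutivity relation $\check r\check r = \mathrm{id}$, equivalently from $\sigma_{\sigma_x(y)}(\tau_y(x)) = x$ and $\tau_{\tau_y(x)}(\sigma_x(y)) = y$, which let one rewrite $\sigma_x^{-1}$ and $\sigma_y^{-1}$ in terms of $\sigma$ and $\tau$ after the change of variables. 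I expect the main obstacle to be precisely this last bookkeeping step: tracking the two independent index substitutions simultaneously and invoking the correct involutivity identity so that the pair $(\sigma_x^{-1}(y),\sigma_y^{-1}(x))$, after relabelling, matches $(\sigma_x(y),\tau_y(x))$ — the algebra is elementary but the index chase is error-prone, so I would do it slowly and cross-check on the Yangian special case $\sigma_x = \mathrm{id}$, ${\mathbb V}_x = I$, where $F = I$ and $\check r = {\cal P}$ as it must.
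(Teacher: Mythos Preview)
Your proposal is correct and follows essentially the same route as the paper. The paper itself defers the detailed argument to \cite{DoiSmo2,Doikoutw} and only records the one-line verification $(F^{op})^{-1}F=r$ with $F^{op}:={\cal P}F{\cal P}$, which is trivially equivalent to your computation $F^{-1}{\cal P}F=\check r$; your explicit index chase leading to $\sum_{x,y}e_{x,y}\otimes e_{\sigma_x^{-1}(y),\sigma_y^{-1}(x)}$ and the final relabelling via the involutivity identity $\sigma_{\sigma_a(b)}(\tau_b(a))=a$ is exactly the ``direct computation'' the paper alludes to.
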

For a detailed proof of the Proposition we refer the interested reader to \cite{DoiSmo2} and \cite{Doikoutw}.
However, by recalling that  $r = {\cal P} \check r,$ 
and using the fact that $\sigma_x,\ \tau_y$ 
are bijections, we confirm by direct computation that $(F^{(op)})^{-1} F= 
\sum_{x, \in X} e_{y, \sigma_x(y)} \otimes e_{x, \tau_y(x)}  =r,$ where we define $F^{op}:= {\cal P} F {\cal P}$ or in the index notation $F_{12}^{op} = F_{21}.$ 
 
\begin{rem} Let the Baxterized  solution of the YBE be $R(\lambda) = \lambda   r + {\cal P}.$ 
If $r$ satisfies the YBE and $r_{12} r_{21} =I$
then the Baxterized $R(\lambda)$ matrix also satisfies the YBE.
If $r= {\cal P} \check r$ is the set-theoretic solution of the YBE then, $R_{12}(\lambda)=F^{-1}_{21} (R_Y)_{12}(\lambda) F_{12}$, 
where $R_Y(\lambda) = \lambda I_V + {\cal P}$ is the Yangian $R$-matrix. 
This immediately follows from the form 
$R_Y(\lambda) = \lambda I+ {\cal P}$, 
and the property $F_{21}^{-1} {\cal P}_{12} {\cal F}_{12} = {\cal P}_{12}$. Note also  
that the twist is not uniquely defined, for instance an alternative twist is of the form 
$G= \sum_{x,y\in X}e_{\tau_{y}(x), x} \otimes e_{y,y} $, 
and $\sum_{x, \in X} e_{y, \sigma_x(y)} \otimes e_{x, \tau_y(x)}=  G_{21}^{-1} G_{12},$ see \cite{Doikoutw}.
\end{rem}

Although we will not extend our discussion further on Drinfeld's twist, it is worth noting that the admissibility of the twist was shown in \cite{Doikoutw}, whereas in \cite{Doikoutw, DoGhVl} it was proven that the quantum algebra coming from set-theoretic Baxterized solutions is in fact a quasi-bialgebra, and the twist turns the quasi-bialgebra to the Yangian Hopf algebra, as expected from Proposition \ref{twistlocal}.  For a detailed discussion on these fundamental issues we refer the interested reader to \cite{Doikoutw,  DoGhVl}.
We should also note that the discovery of the twist provides important information regarding the derivation of the spectrum of the associated quantum integrable systems, especially the ones with  special open boundary conditions. This issue will be addressed in detail separately in a future work.

\section{Set-theoretic reflection equation}

\noindent We shall focus in this section on the set-theoretic analogue of the reflection equation.
Let $ (X, \check r)$ be a non-degenerate set-theoretic solution to the Yang-Baxter equation. 
A map $k : X\to X$ is a reflection of $(X, \check r)$ if it satisfies 
\begin{equation}
		\check r(k\times id_X) \check r (k\times id_X)=(k\times id_X) \check r(k\times id_X) \check r. \label{re1}
	\end{equation}
We say that $k$ is a set-theoretic solution to the reflection equation.  We also say that  $k$ is involutive if $k(k(x))  = x$.

Examples of functions $k$ satisfying the reflection equation related to  braces can be found in \cite{SmoVenWes, Katsa, DeCommer}.
Recall that this set-theoretical version of the reflection equation  together with the first examples of solutions first appeared in the work 
of Caudrelier and Zhang \cite{CauZha}.
Solutions of the set-theoretic reflection equation using braces have been studied in \cite{SmoVenWes, Katsa}.
The main Theorem 1.8 of \cite{SmoVenWes} is stated below.

\begin{rem} We note that in \cite{SmoVenWes} the ``dual'' reflection equation is considered,  i.e. 
\begin{equation}
		\check r(id_X\times k) \check r (id_X\times k)=(id_X\times k) \check r(id_X\times k)\check r, \label{re1d}
	\end{equation}
thus in our findings  below $\sigma,$ $\tau $ are interchanged compared to the results of \cite{SmoVenWes}.
\end{rem}

\begin{thm}\label{Prop7} Let $(X,\check r)$ $\check r : X \times X \to X \times X$ be an involutive, 
non-degenerate solution of the braid equation,
$\check r(x,y) = (\sigma  _{x}(y), \tau _{x}(y)).$
Let also the map $k: X\rightarrow X,$ then $k$ satisfies the reflection equation (\ref{re1}) if and only if 
\begin{equation}
\tau_{\tau _{y}(x)}(k(\sigma _{x}(y))) =\tau_{\tau _{y}(k(x))}(k(\sigma _{k(x)}(y))). \label{cc1}
\end{equation}
\end{thm}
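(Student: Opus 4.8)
The plan is to expand both sides of the reflection equation~\eqref{re1} as maps on triples (or rather, since $k$ acts only on the first factor, effectively on pairs after tracking the first component) and compare the resulting components, exactly in the spirit of the derivation of \eqref{C1}--\eqref{C3} for the braid equation. First I would write $\check r(x,y)=(\sigma_x(y),\tau_x(y))$ and carefully compute the left-hand side $\check r(k\times id_X)\check r(k\times id_X)$ evaluated on a generic pair $(x,y)$: applying $k\times id_X$ gives $(k(x),y)$, then $\check r$ gives $(\sigma_{k(x)}(y),\tau_{k(x)}(y))$, then $k\times id_X$ gives $(k(\sigma_{k(x)}(y)),\tau_{k(x)}(y))$, and a final $\check r$ produces a pair whose two components I record as $\big(\sigma_{k(\sigma_{k(x)}(y))}(\tau_{k(x)}(y)),\ \tau_{k(\sigma_{k(x)}(y))}(\tau_{k(x)}(y))\big)$. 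Symmetrically I would compute the right-hand side $(k\times id_X)\check r(k\times id_X)\check r$ on $(x,y)$: first $\check r$ gives $(\sigma_x(y),\tau_x(y))$, then $k\times id_X$ gives $(k(\sigma_x(y)),\tau_x(y))$, then $\check r$ gives $\big(\sigma_{k(\sigma_x(y))}(\tau_x(y)),\ \tau_{k(\sigma_x(y))}(\tau_x(y))\big)$, then $k\times id_X$ acts on the first slot. Equating the two sides gives one equation for each component.

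Next I would argue that the two component-equations are not independent: because $\check r$ is involutive and non-degenerate, $\check r(a,b)=(\sigma_a(b),\tau_a(b))$ is a bijection of $X\times X$, so two pairs agree iff their images under $\check r$ agree, and similarly one can trade a relation on first components for one on second components using the bijectivity of the $\sigma$'s and $\tau$'s together with the involutivity identities $\check r\check r=\id$. The cleanest route is to recognise that equality of the full output pairs on both sides is what \eqref{re1} asserts, and then to reduce this to the single scalar identity \eqref{cc1} by composing with $\check r^{-1}=\check r$ on the appropriate side and relabelling. Concretely, one checks that the second component of the left-hand side, $\tau_{k(\sigma_{k(x)}(y))}(\tau_{k(x)}(y))$, matches, after the substitution $x\leftrightarrow$ the reflected variable and using involutivity to rewrite $\tau_{k(x)}(y)$ and $\sigma_{k(x)}(y)$ in terms of $\sigma_x(y),\tau_x(y)$, precisely the right side of \eqref{cc1}, while the right-hand side's corresponding component gives the left side of \eqref{cc1}; one then verifies that the remaining (first-component) equation follows automatically from this one together with $\sigma_a(b)\circ$-type relations, or is literally equivalent to it after applying $\check r$.

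The main obstacle I anticipate is the bookkeeping needed to see that the two component-equations collapse to the \emph{single} identity \eqref{cc1} rather than to a pair of a priori distinct conditions; this requires using involutivity in the form $\sigma_{\sigma_x(y)}(\tau_x(y))=x$ and $\tau_{\sigma_x(y)}(\tau_x(y))=y$ (which follow from $\check r\check r=\id$) to simplify nested expressions like $\sigma_{k(\sigma_x(y))}(\tau_x(y))$, and a careful change of variables matching $(x,y)$ on one side with its $\check r$-image on the other. Once those substitutions are in place the equivalence is a direct relabelling. I would therefore organise the proof as: (i) record the two expanded sides; (ii) state the involutivity identities as a lemma-style remark; (iii) reduce both component-equations to \eqref{cc1}; (iv) conclude the ``if and only if'' since every step is a bijective substitution and hence reversible. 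One should also note at the end that the statement is the $\sigma\leftrightarrow\tau$-swapped analogue of Theorem~1.8 of \cite{SmoVenWes}, consistent with the Remark preceding the theorem, which provides an independent sanity check.
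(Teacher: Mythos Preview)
Your approach is sound and more informative than the paper's own ``proof'', which consists of a single sentence referring the reader to Theorem~1.8 of \cite{SmoVenWes} (with $\sigma$ and $\tau$ interchanged, as flagged in the Remark just above). So the paper gives no self-contained argument; you are essentially reconstructing what that reference does.

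A couple of points of comparison and one tightening. First, a notational slip: in the theorem statement (and hence in your expansion) $\check r(x,y)$ is written $(\sigma_x(y),\tau_x(y))$, but the paper's standing convention is $\check r(x,y)=(\sigma_x(y),\tau_y(x))$; with the correct convention your second-component equation is literally \eqref{cc1} with no further massaging, which makes the ``only if'' direction immediate. Second, your identification of the obstacle---collapsing the two component-equations to the single identity \eqref{cc1}---is exactly right, and there is a clean way to carry it out that you gesture at but do not quite make explicit. Writing $\Phi=\check r(k\times\id)\check r(k\times\id)$ and $\Psi=(k\times\id)\check r(k\times\id)\check r$, involutivity $\check r^2=\id$ gives the \emph{automatic} intertwining relations $\check r\,\Phi\,\check r=\Psi$ and $\check r\,\Psi\,\check r=\Phi$, hence $\Phi\circ\check r=\check r\circ\Psi$ and $\Psi\circ\check r=\check r\circ\Phi$. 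Now assume the second components agree everywhere, $L_2=R_2$. Evaluating this at $\check r(x,y)$ and using the two intertwining relations yields $\tau_{R_2(x,y)}(R_1(x,y))=\tau_{L_2(x,y)}(L_1(x,y))$; since $L_2=R_2$ and each $\tau_b$ is bijective (non-degeneracy), $L_1=R_1$ follows. This is precisely your ``compose with $\check r^{-1}=\check r$ and relabel'' step made concrete, and it shows that involutivity and non-degeneracy are exactly what force the first-component equation to be redundant. With this in hand your plan (i)--(iv) goes through cleanly, and the final cross-check against \cite{SmoVenWes} you mention is indeed all the paper itself offers.
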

\begin{proof} The proof is presented in Theorem 1.8  in \cite{SmoVenWes}, 
when we interchange $\sigma $ with $\tau $. 
\end{proof}
\begin{rem}
Let $(X,\check r)$ be an involutive, non-degenerate solution of the braid equation where we denote 
${\check r}(x,y)=(\sigma _{x}(y), \tau _{y}(x))$, and let $k:X\rightarrow X$ be a function. 
We say that $k$ is $\tau $-equivariant if for every $x, y\in X$ we have 
\[\tau _{x}(k(y))=k(\tau _{x}(y)).\]
Every function $k:X\rightarrow X$  satisfying $k(\tau _{y}(x))=\tau_{y}(k(x))$
satisfies the set-theoretic reflection equation (see Theorem 1.9 in  \cite{SmoVenWes})). 
The proof for the latter statement is straightforward, indeed  if $k(\tau _{y}(x))=\tau_{y}(k(x)),$ then the LHS of (\ref{cc1}) becomes
$k(\tau_{\tau _{y}(x)}(\sigma _{x}(y)))= k(y), $ where we have used the fact that $\check r$ is involutive i.e.  
$\tau_{\tau _{y}(x)}(\sigma _{x}(y)) =y,$ similarly  the RHS  of (\ref{cc1}) is $k(y),$ which shows that $k$ is a reflection. 
\end{rem}

\begin{example}
 In \cite{Katsa} central elements were used to to define 
$\tau $-equivariant functions in an analogous way- as $k(x)=\tau_{c}(x)$, where $c$ is central, i.e.  for every $x\in X,$ $c\circ x =x \circ c$. 
\end{example}

\subsection{Reflection algebra from set-theoretic solutions}

\noindent 
We use the matrix notation introduced in Subsection 1.1,  then the reflection matrix $K$ is  an ${\cal N} \times {\cal N}$ matrix
represented as: ${\mathrm k} = \sum_{x\in X} e_{x, k(x)},$
and satisfies the constant reflection equation:
\begin{equation}
\check r(  {\mathrm k}  \otimes I_V)  \check r ( {\mathrm k}\otimes  I_V )=({\mathrm k}\otimes  I_V ) 
\check r({\mathrm k}\otimes  I_V ) \check r.
\end{equation}

We introduce quadratic algebras associated to the classification
of boundary conditions in quantum integrable models., i.e. we consider generic $\lambda$ dependent $R$-matrices. 
To define these quadratic algebras
in addition to the $R$-matrix we also need to introduce the $K$-matrix, 
which physically describes the interaction of particle-like excitations displayed by 
the quantum integrable  system, with the boundary of the system. 
The $K$-matrix satisfies \cite{Cherednik, Sklyanin}:    
\begin{equation}
R_{12}(\lambda_1 - \lambda_2) {\mathbb K}_1(\lambda_1) \hat R_{12}(\lambda_1+\lambda_2) {\mathbb K}_{2}(\lambda_2) =  
{\mathbb K}_{2}(\lambda_2)  \hat R_{21}(\lambda_1+\lambda_2) {\mathbb K}_1(\lambda_1 )R_{21}(\lambda_1 - \lambda_2), \label{RE2}
\end{equation}
where we define in general $A_{21} = {\cal P}_{12} A_{12} {\cal P}_{12}$.
We focus on  the case where $\hat R_{12} (\lambda)= R_{12}^{-1}(-\lambda)\propto R_{21}(\lambda)$,   i.e. 
we consider the boundary Yang-Baxter or reflection equation \cite{Cherednik, Sklyanin},  and  we recall that $\check R = {\cal P} R$ then 
the reflection equation is expressed in the braid form as
\begin{equation}
\check R_{12}(\lambda_1-\lambda_2){\mathbb K}_1(\lambda_1) \check R_{12}(\lambda_1 +\lambda_2) {\mathbb K}_1(\lambda_2)= 
{\mathbb K}_1 (\lambda_2) \check R_{12}(\lambda_1 +\lambda_2) {\mathbb K}_1(\lambda_1) \check R_{12}(\lambda_1-\lambda_2). \label{RE}
\end{equation}
As in the case of the Yang-Baxter equation, where representations of the $A$-type Hecke algebra are associated to solutions of the
Yang-Baxter equation \cite{Jimbo}, via the Baxterization process,  representations of the $B$-type Hecke algebra provide solutions 
of the reflection equation \cite{LevyMartin, DoikouMartin}.

\noindent We shall discuss in more detail now the algebra associated to the quadratic equation (\ref{RE2}). 
A solution of the quadratic equation (\ref{RE2}) is of the form \cite{Sklyanin}
\begin{equation}
{\mathbb K}(\lambda |\theta_1)= L(\lambda -\theta_1)\  \big ( K(\lambda) \otimes \mbox{id}\big )\  \hat L(\lambda +\theta_1), \label{rep1}
\end{equation}
where $L(\lambda) \in\mbox{End}(V) \otimes {\mathfrak A}$ satisfies the RTT  relation (\ref{RTT})
and $K(\lambda) \in \mbox{End}(V) $ is a $c$-number solution of the quadratic equation (\ref{RE2})  for some $R(\lambda)\in\mbox{End}(V \otimes V),$ solution of the Yang-Baxter equation.  
We also define (in the index notation, see also Footnote 2)
\begin{eqnarray}
 \hat L_{1n}(\lambda) = L_{1n}^{-1}(-\lambda) \nonumber
\end{eqnarray}
The quadratic algebra  ${\mathfrak B}$ defined by (\ref{RE2}) is a left co-ideal of the quantum algebra ${\mathfrak A}$ for
a given $R$-matrix (see also e.g. \cite{Sklyanin, DeMa,  MoRa, Doikou2}), i.e. the algebra  is endowed with a co-product 
$\Delta: {\mathfrak B} \to {\mathfrak B} \otimes {\mathfrak  A}$ \cite{Sklyanin}. Indeed, we define (in the index notation)
\begin{equation}
{\mathbb T}_{0;12}(\lambda|\theta_1,\theta_2) = L_{02}(\lambda-\theta_2) {\mathbb K}_{01}(\lambda|\theta_1)\hat L_{02}(\lambda+\theta_2), \label{rep3}
\end{equation}
where ${\mathbb K}(\lambda|\theta_1)$ is given in (\ref{rep1}) and in the index notation ${\mathbb K}_{01}(\lambda|\theta_1) = L_{01}(\lambda-\theta_1) K_0(\lambda) \hat L_{01}(\lambda+\theta_1)$.
Let also ${\mathbb K}_{01}(\lambda|\theta_1) = \sum_{a,b =1}^{\cal N}e_{a,b}\otimes {\mathbb K}_{a,b}(\lambda|\theta_1)\otimes\mbox{id}_{\mathfrak A} $,  $L_{02} = \sum_{a,b=1}^{\cal N}e_{a,b} \otimes \mbox{id}_{\mathfrak A} \otimes L_{a,b}(\lambda)$ and
 ${\mathbb T}_{0;12}(\lambda|\theta_1,\theta_2)= \sum_{a,b=1}^{\cal N}e_{a,b} \otimes \Delta({\mathbb K}_{a, b}(\lambda|\theta_1,\theta_2)),$ then via expression (\ref{rep3}):
\begin{equation}
\Delta({\mathbb K}_{a,b}(\lambda|\theta_1, \theta_2) )= \sum_{k,l} {\mathbb K}_{k,l}(\lambda|\theta_1) \otimes L_{a,k}(\lambda -\theta_2) \hat L_{l,b}(\lambda+\theta_2), \label{rep2}
\end{equation}
where the elements ${\mathbb K}_{k,l}(\lambda|\theta_1)$  can be also re-expressed in terms of the elements of the $c$-number matrix $K$ and $L$  when considering the realization (\ref{rep1}).

In our analysis  for the rest of the present subsection we shall  be considering  $\check R(\lambda) = \lambda \check r + I_V^{\otimes 2}$, 
where $\check r$ provides a representation of the symmetric group.

\begin{pro} \label{defin} Let $\check R(\lambda)= \lambda\check r + I_V^{\otimes 2}$, where $\check r$ 
provides a tensor realization of 
the Hecke algebra ${\cal H}_N(q=1)$, and let ${\mathbb K}(\lambda)$ satisfy the quadratic equation (\ref{RE2}).
Let also ${\mathbb K}(\lambda) = 
\sum_{n=0}^{\infty}{{\mathbb K}^{(n)} \over \lambda^n}$ and $~{\mathbb K}^{(n)} = \sum_{z, w\in X} e_{z,w}\otimes {\mathbb K}_{z,w}^{(n)},$ where ${\mathbb K}_{z,w}^{(n)}$ are the generators of the quadratic algebra defined by (\ref{RE2}). 
The exchange relations among the
quadratic algebra generators are encoded in:
\begin{eqnarray}
& & \check r_{12}{\mathbb K}_1^{(n+2)} \check r_{12} {\mathbb K}_1^{(m)} - \check r_{12} {\mathbb K}_1^{(n)} 
\check r_{12} {\mathbb K}_1^{(m+2)} + 
\check r_{12} {\mathbb K}_1^{(n+1)} {\mathbb K}_1^{(m)}\nonumber\\ 
& & - \check r_{12} 
{\mathbb K}_1^{(n)}  {\mathbb K}_1^{(m+1)}
+  {\mathbb K}_1^{(n+1) }\check r_{12} {\mathbb K}_1^{(m)} +  {\mathbb K}_1^{(n) }\check r_{12}
{\mathbb K}_1^{(m+1)}  + {\mathbb K}_1^{(n)} {\mathbb K}_1^{(m)} \nonumber\\
&= & {\mathbb K}_1^{(m)} \check r_{12} {\mathbb K}_1^{(n+2)}\check r_{12} - {\mathbb K}_1^{(m+2)} 
\check r_{12} {\mathbb K}_1^{(n)}\check r_{12} + 
{\mathbb K}_1^{(m)}  {\mathbb K}_1^{(n+1)} \check r_{12} \nonumber\\ 
&& - {\mathbb K}_1^{(m+1)} {\mathbb K}_1^{(n)}\check r_{12}+  {\mathbb K}_1^{(m+1)} 
\check r_{12} {\mathbb K}_1^{(n)}+  
{\mathbb K}_1^{(m)} \check r_{12} {\mathbb K}_1^{(n+1)} + {\mathbb K}_1^{(m)}{\mathbb K}_1^{(n)}. \nonumber \\
&& \label{RABasic}
\end{eqnarray}
\end{pro}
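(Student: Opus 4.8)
The plan is to substitute the Baxterized $R$-matrix $\check R(\lambda) = \lambda \check r + I_{V^{\otimes 2}}$ directly into the braid-form reflection equation (\ref{RE}), expand everything as a formal power series in $\lambda_1^{-1}$ and $\lambda_2^{-1}$, and collect the coefficient of $\lambda_1^{-n}\lambda_2^{-m}$. This is the exact analogue of the computation that produced the quantum algebra relations (\ref{fund}) from (\ref{RTT}); the only new feature is that the $K$-matrix now appears in the combination $\check R_{12}(\lambda_1-\lambda_2){\mathbb K}_1(\lambda_1)\check R_{12}(\lambda_1+\lambda_2){\mathbb K}_1(\lambda_2)$, so a single $\check R$ factor carries the argument $\lambda_1+\lambda_2$ rather than a difference. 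Since only the first space of ${\mathbb K}$ is physical and the second tensor slot is the auxiliary one on which $\check r_{12}$ acts, all the $\check r_{12}$ factors are honest constant matrices and the only objects carrying a spectral parameter are $\check R_{12}(\lambda_1\pm\lambda_2)$ and ${\mathbb K}_1(\lambda_i)$.

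First I would write out the left-hand side of (\ref{RE}) as
\[
\bigl(\lambda_1\check r_{12}-\lambda_2\check r_{12}+I\bigr)\,{\mathbb K}_1(\lambda_1)\,\bigl(\lambda_1\check r_{12}+\lambda_2\check r_{12}+I\bigr)\,{\mathbb K}_1(\lambda_2),
\]
and symmetrically for the right-hand side with the order of the four factors reversed. Expanding the product of the two linear-in-$\lambda$ polynomials gives terms proportional to $\lambda_1^2$, $\lambda_1\lambda_2$, $\lambda_2^2$, $\lambda_1$, $\lambda_2$ and $1$, each multiplied by a word in $\check r_{12}$, ${\mathbb K}_1(\lambda_1)$, ${\mathbb K}_1(\lambda_2)$. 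Then I substitute ${\mathbb K}_1(\lambda_i)=\sum_{k\ge 0}\lambda_i^{-k}{\mathbb K}_1^{(k)}$ into each word. A monomial $\lambda_1^{2}\,\lambda_1^{-a}\lambda_2^{-b}$ contributes to the $\lambda_1^{-n}\lambda_2^{-m}$ coefficient precisely when $a=n+2,\ b=m$, a monomial $\lambda_1^{1}\lambda_1^{-a}\lambda_2^{-b}$ when $a=n+1,\ b=m$, a monomial $\lambda_2\,\lambda_1^{-a}\lambda_2^{-b}$ when $a=n,\ b=m-1$ — but note that the $\lambda_2^2$ and $\lambda_1\lambda_2$ and $\lambda_2$ pieces come with $\check r_{12}$ words identical (by the $\check r^2=I$ reduction, see below) to the $\lambda_1^2$, $\lambda_1$ pieces up to sign, and these cancel between the two sides or combine into the stated shifts; one simply bookkeeps which shifted index each of the seven terms on each side of (\ref{RABasic}) comes from. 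Using $\check r_{12}^2=I_{V^{\otimes 2}}$ (recorded in Subsection 1.1, since $\check r$ realizes ${\cal H}_N(q=1)$, hence the symmetric group) collapses any word with two adjacent $\check r_{12}$'s, which is what keeps the final relation at the displayed length rather than producing longer words.

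The main obstacle is purely organizational: the expansion of $(\lambda_1\check r-\lambda_2\check r+I){\mathbb K}_1(\lambda_1)(\lambda_1\check r+\lambda_2\check r+I){\mathbb K}_1(\lambda_2)$ has nine gross terms before the series substitution, and one must track the signs coming from the $-\lambda_2\check r$ in $\check R_{12}(\lambda_1-\lambda_2)$ and the fact that the right-hand side has the word order reversed, so that e.g. the $\lambda_1^2$ term on the left is $\check r_{12}{\mathbb K}_1(\lambda_1)\check r_{12}{\mathbb K}_1(\lambda_2)$ while on the right it is ${\mathbb K}_1(\lambda_2)\check r_{12}{\mathbb K}_1(\lambda_1)\check r_{12}$. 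After grouping by the six powers $\{\lambda_1^2,\lambda_1\lambda_2,\lambda_2^2,\lambda_1,\lambda_2,1\}$ and applying $\check r^2=I$, the $\lambda_1\lambda_2$ and $\lambda_2^2$ contributions reorganize (after relabelling $n\leftrightarrow m$ where the free index sits on the $\lambda_2$ slot) into the $\check r_{12}{\mathbb K}_1^{(n)}{\mathbb K}_1^{(m+1)}$-type and ${\mathbb K}_1^{(m)}{\mathbb K}_1^{(n)}$-type terms, and what remains is exactly (\ref{RABasic}). I would present the $\lambda$-graded decomposition explicitly for one side, note that the other side is obtained by the reversal symmetry, and then read off the coefficient of $\lambda_1^{-n}\lambda_2^{-m}$ term by term; no property of $\check r$ beyond the braid relation, $\check r^2=I$, and the defining relation (\ref{RE2}) for ${\mathbb K}$ is needed.
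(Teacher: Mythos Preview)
Your approach is the paper's: pass to the braid form (\ref{RE}), substitute $\check R(\lambda_1\pm\lambda_2)=(\lambda_1\pm\lambda_2)\check r+I$ and the series for ${\mathbb K}$, and collect the coefficient of $\lambda_1^{-n}\lambda_2^{-m}$. However, your bookkeeping is overcomplicated and in places wrong. Because $(\lambda_1-\lambda_2)(\lambda_1+\lambda_2)=\lambda_1^2-\lambda_2^2$ has no cross term, the left side of (\ref{RE}) is simply
\[
(\lambda_1^2-\lambda_2^2)\,\check r_{12}{\mathbb K}_1(\lambda_1)\check r_{12}{\mathbb K}_1(\lambda_2)
+(\lambda_1-\lambda_2)\,\check r_{12}{\mathbb K}_1(\lambda_1){\mathbb K}_1(\lambda_2)
+(\lambda_1+\lambda_2)\,{\mathbb K}_1(\lambda_1)\check r_{12}{\mathbb K}_1(\lambda_2)
+{\mathbb K}_1(\lambda_1){\mathbb K}_1(\lambda_2),
\]
and inserting the $\lambda^{-1}$-expansions the seven terms on the left of (\ref{RABasic}) drop out immediately (e.g.\ $\lambda_1^2\cdot\lambda_1^{-(n+2)}\lambda_2^{-m}$ gives the first term, $-\lambda_2^2\cdot\lambda_1^{-n}\lambda_2^{-(m+2)}$ the second, etc.). There is no $\lambda_1\lambda_2$ contribution to reorganize, no $n\leftrightarrow m$ relabelling, and no word with two adjacent $\check r_{12}$'s ever appears, so the involutivity $\check r^2=I$ is not used in this proposition at all.
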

\begin{proof}
First we act from the left and right of (\ref{RE2}) with the permutation operator ${\cal P},$ then (\ref{RE2}) becomes
\begin{equation}
\check R_{12}(\lambda_1-\lambda_2) {\mathbb K}_1(\lambda_1) \check R_{12}(\lambda_1+\lambda_2)
{\mathbb K}_1(\lambda_2)={\mathbb K}_1(\lambda_2)  \check R_{12}(\lambda_1+\lambda_2){\mathbb K}_1(\lambda_1)\check R_{12}(\lambda_1-\lambda_2), \label{RE3}
\end{equation}
where $\check R(\lambda_1 \pm \lambda_2) = (\lambda_1 \pm \lambda_2)\check r + I_V^{\otimes 2 }$ , and we recall that ${\mathbb K}(\lambda_i) = \sum_{n=0}^{\infty}{{\mathbb K}^{(n)} \over \lambda_i^n}$ ($i\in \{1,\ 2\}$).
We substitute the above expressions in  (\ref{RE3}),  and we gather terms proportional to $\lambda_1^{-n} \lambda_2^{-m}$, $n, m\geq 0$ in the LHS and RHS of (\ref{RE3}), which lead to  (\ref{RABasic}). Recalling also that in general $A_{12} = A \otimes \mbox{id}_{\mathfrak A}$, 
$~{\mathbb K}^{(n)}_1 = \sum_{z, w\in X} e_{z,w} \otimes I_V\otimes {\mathbb K}_{z,w}^{(n)},$ 
and substituting the latter  expressions in (\ref{RABasic}) we obtain the exchange relations among the 
generators ${\mathbb K}_{z,w}^{(n)}$, which are particularly involved and are omitted here.
\end{proof}

It is useful for the following Corollaries to focus on terms proportional to $\lambda_1^2 \lambda_2^{-m}$ and $\lambda_1 \lambda_2^{-m}$ 
(or equivalently  $\lambda_2^2 \lambda_1^{-m}$ and $\lambda_2 \lambda_1^{-m}$) in the $\lambda_{1,2}$ expansion of the quadratic algebra, and obtain
\begin{eqnarray}
&& \check r_{12} {\mathbb K}_1^{(0)}\check r_{12} {\mathbb K}_1^{(m)}  =  
{\mathbb K}_1^{(m)} \check r_{12}{\mathbb K}_1^{(0)}\check  r_{12}  \label{RABasic2} \\
&& \check r_{12} {\mathbb K}_1^{(1)}\check r_{12} {\mathbb K}_1^{(m)} + {\mathbb K}_1^{(0)} \check r_{12} {\mathbb K}_1^{(m)}
+  \check r_{12} {\mathbb K}_1^{(0)} {\mathbb K}_1^{(m)}
=  \label{RABasic2b} \\
&& {\mathbb K}_1^{(m)} \check r_{12}{\mathbb K}_1^{(1)}\check  r_{12}    +  {\mathbb K}_1^{(m)} \check r_{12}  {\mathbb K}_1^{(0)}
+  {\mathbb K}_1^{(m)}  {\mathbb K}_1^{(1)}\check  r_{12}.\nonumber
\end{eqnarray}

\begin{cor} \label{Proposition 6}{\it A finite non-abelian sub-algebra of the reflection algebra  exists, 
realized by the elements of ${\mathbb K}^{(1)}$ when ${\mathbb K}^{(0)} \propto I_V$.}
\end{cor}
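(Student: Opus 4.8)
The plan is to read off the subalgebra relations directly from the two specialised reflection-algebra relations (\ref{RABasic2}) and (\ref{RABasic2b}), once we impose ${\mathbb K}^{(0)} = \kappa\, I_V$ with $\kappa$ a nonzero scalar, so that ${\mathbb K}_1^{(0)} = \kappa\, I_{V^{\otimes 2}}\otimes\mbox{id}_{\mathfrak A}$. The first observation is that this choice makes the top-degree ($\lambda_1^2\lambda_2^{-m}$) sector empty: because $\check r$ is involutive one has $\check r_{12}^2 = I_{V^{\otimes 2}}\otimes\mbox{id}_{\mathfrak A}$ (recall $\check r^2 = I_{V^{\otimes 2}}$), hence $\check r_{12}\,{\mathbb K}_1^{(0)}\,\check r_{12} = \kappa\, \check r_{12}^2 = \kappa\, I$, and every relation in the family (\ref{RABasic2}) collapses to $\kappa\,{\mathbb K}_1^{(m)} = \kappa\,{\mathbb K}_1^{(m)}$, imposing no constraint for any $m \ge 0$.

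Next I would specialise (\ref{RABasic2b}) to $m=1$ and substitute ${\mathbb K}_1^{(0)} = \kappa\, I$; every surviving term then involves only $\check r_{12}$ and ${\mathbb K}_1^{(1)}$, and after collecting terms the relation reads
\[
\check r_{12}\,{\mathbb K}_1^{(1)}\,\check r_{12}\,{\mathbb K}_1^{(1)} + 2\kappa\,\check r_{12}\,{\mathbb K}_1^{(1)} = {\mathbb K}_1^{(1)}\,\check r_{12}\,{\mathbb K}_1^{(1)}\,\check r_{12} + \kappa\,{\mathbb K}_1^{(1)}\,\check r_{12} + \bigl({\mathbb K}_1^{(1)}\bigr)^{2}\,\check r_{12}.
\]
Expanding ${\mathbb K}_1^{(1)} = \sum_{z,w\in X} e_{z,w}\otimes I_V\otimes {\mathbb K}_{z,w}^{(1)}$ and $\check r_{12} = \check r\otimes\mbox{id}_{\mathfrak A}$ and projecting onto the $e_{a,b}\otimes e_{c,d}$ components yields a finite system of quadratic exchange relations among the ${\cal N}^{2}$ generators ${\mathbb K}_{z,w}^{(1)}$ and no others, i.e.\ a relation set that closes on ${\mathbb K}^{(1)}$ alone. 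To be sure these relations genuinely describe a subalgebra I would additionally verify that the linear ansatz ${\mathbb K}(\lambda) = \kappa\lambda\, I_V + {\mathbb K}^{(1)}$ (equivalently, all ${\mathbb K}^{(n)} = 0$ for $n\ge 2$) already solves the braid reflection equation (\ref{RE3}); this shows the higher generators can be switched off consistently, so no hidden higher-order relation reacts back on ${\mathbb K}^{(1)}$.

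Non-abelianness I would establish by producing an explicit realisation. Choosing the $c$-number boundary matrix $K(\lambda)$ in the Sklyanin-type formula (\ref{rep1}) to be a scalar multiple of $I_V$ gives ${\mathbb K}(\lambda)\propto L(\lambda-\theta_1)\,\hat L(\lambda+\theta_1)$, whence ${\mathbb K}^{(0)}\propto I_V$ automatically and ${\mathbb K}^{(1)}$ is a fixed linear combination of the generators $L_{z,w}^{(1)}$ of the quantum algebra ${\mathfrak A}$. Since those generators do not commute — in the Yangian case $\check r = {\cal P}$ they satisfy the $\mathfrak{gl}_{\cal N}$-type relations (\ref{fund2b}), and for a general brace a twisted version of them — the algebra generated by the ${\mathbb K}_{z,w}^{(1)}$ is genuinely non-abelian while being finitely generated.

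The step I expect to be the real work is the closure verification in the second paragraph: one must expand $\check R_{12}(\lambda_1-\lambda_2)\,{\mathbb K}_1(\lambda_1)\,\check R_{12}(\lambda_1+\lambda_2)\,{\mathbb K}_1(\lambda_2)$ and the opposite product with $\check R(\lambda)=\lambda\check r+I$ and ${\mathbb K}(\lambda)=\kappa\lambda\, I+{\mathbb K}^{(1)}$, use $\check r^2 = I$ repeatedly, and check that at every order $\lambda_1^{i}\lambda_2^{j}$ the identity either holds trivially or reproduces the displayed quadratic relation; the bookkeeping is elementary but is where all the effort goes. Everything else — the vanishing of (\ref{RABasic2}), the precise form of the quadratic relation, and non-commutativity via (\ref{rep1}) and (\ref{fund2b}) — is immediate from results already at our disposal.
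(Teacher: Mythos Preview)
Your core approach matches the paper's: isolate the order-$\lambda_1\lambda_2^{-m}$ relation (\ref{RABasic2b}), set $m=1$ and ${\mathbb K}^{(0)}=\kappa I_V$, and read off a closed quadratic relation among the ${\mathbb K}^{(1)}_{z,w}$. The paper simply rewrites (\ref{RABasic2}) and (\ref{RABasic2b}) in commutator form as (\ref{rela1}), (\ref{rela2}), observes via (\ref{rep1}) that ${\mathbb K}^{(0)}\propto I_V$ when the $c$-number $K\propto I_V$, and then declares that (\ref{rela2}) at $m=1$ furnishes the defining relations of the finite sub-algebra. You go further in two respects the paper omits: (i) you propose to check consistency of the truncation by verifying that the linear ansatz ${\mathbb K}(\lambda)=\kappa\lambda I+{\mathbb K}^{(1)}$ solves the full reflection equation (\ref{RE3}), and (ii) you supply an explicit non-abelianness argument through the Sklyanin realisation (\ref{rep1}) and the non-commutativity of the $L^{(1)}_{z,w}$. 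Both are natural strengthenings of the paper's rather terse proof.

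One caution: the displayed quadratic relation you write down inherits what is evidently a misprint in (\ref{RABasic2b}) --- the last term ${\mathbb K}_1^{(m)}{\mathbb K}_1^{(1)}\check r_{12}$ there should read ${\mathbb K}_1^{(m)}{\mathbb K}_1^{(0)}\check r_{12}$, as the paper's own (\ref{rela2}) makes clear and as a direct expansion of (\ref{RE3}) confirms. With the corrected version and ${\mathbb K}^{(0)}_1=\kappa I$, the $m=1$ relation becomes the symmetric
\[
\check r_{12}{\mathbb K}_1^{(1)}\check r_{12}{\mathbb K}_1^{(1)}+2\kappa\,\check r_{12}{\mathbb K}_1^{(1)}
={\mathbb K}_1^{(1)}\check r_{12}{\mathbb K}_1^{(1)}\check r_{12}+2\kappa\,{\mathbb K}_1^{(1)}\check r_{12},
\]
equivalently $[\check r_{12}{\mathbb K}_1^{(1)}\check r_{12},\,{\mathbb K}_1^{(1)}]=2\kappa\bigl({\mathbb K}_1^{(1)}\check r_{12}-\check r_{12}{\mathbb K}_1^{(1)}\bigr)$, rather than the asymmetric one containing $({\mathbb K}_1^{(1)})^{2}\check r_{12}$. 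This does not affect the logic of your argument, only the specific formula.
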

\begin{proof}
We focus on terms proportional $\lambda_1^2 \lambda_2^{-m}$ and $\lambda_1 \lambda_2^{-m}$ (\ref{RABasic2}),  
(\ref{RABasic2b}) in the case of the reflection algebra:
\begin{eqnarray}
&& \Big [\check r_{12} {\mathbb K}_1^{(0)} \check r_{12},\ {\mathbb K}_1^{(m)} \Big ] =0 \label{rela1}  \\
&& \Big [ \check r_{12} {\mathbb K}_1^{(1)}\check r_{12},\ {\mathbb K}_1^{(m)} \Big ]= \label{rela2}\\
&&  \ {\mathbb K}_1^{(m)}  {\mathbb K}_1^{(0)} \check r_{12}  + {\mathbb K}_1^{(m)}  
\check r_{12}  {\mathbb K}_1^{(0)} - {\mathbb K}_1^{(0)} \check r_{12} {\mathbb K}_1^{(m)}
-  \check r_{12} {\mathbb K}_1^{(0)}{\mathbb K}_1^{(m)}.  \nonumber
\end{eqnarray}
Notice that due to (\ref{rep1})  in the case of the reflection algebra ${\mathbb K}^{(0)} 
\propto  I_V$ when the $c$-number matrix $K  \propto I_V$.
For $m=1$ equation (\ref{rela2}) provides the defining relations of a finite sub-algebra of the reflection algebra generated 
by ${\mathbb K}^{(1)}_{x,y}$ .
\end{proof}
With this we conclude our presentation on the algebraic content of both set theoretic Yang-Baxter and reflection equations.

\section{Conclusions}
\noindent We presented fundamental findings on involutive, non-degenerate solutions of the set-theoretic Yang-Baxter and reflection equations. 
We  recalled the notion of braces and showed a number of key properties necessary for the solution 
of the set-theoretic Yang-Baxter equation.  We then identified
the associated quantum algebra for parameter dependent  set-theoretic solutions and we briefly discussed the notion of the Drinfeld twist for involutive solutions and their relation to the Yangian.
In the second part we focused on reflections and we derived the associated reflection algebra for $R$-matrices being  
Baxterized solutions of the symmetric group and showed that there exists a ``reflection'' finite sub-algebra for some special choice of reflection maps

The next important step is the diagonalizaton of the constructed spin chain like systems \cite{DoiSmo, DoiSmo2} 
for open and periodic system.  This is a challenging problem,  however  the discovery of the associated Drinfeld twist 
\cite{Doikoutw,  DoGhVl} is a first important step towards this direction.  The deeper understanding of the associated Drinfeld twist and the properties of set-theoretic solutions, especially the involutive ones, will provide the necessary background for the derivation of the universal $R$-matrix in this context.

\subsection*{Acknowledgments}

\noindent  I am indebted to A. Smoktunowicz  and B. Rybolowicz for useful discussions and prior collaborations on the subject.
Support from the EPSRC research grant  EP/V008129/1 is also acknowledged.

\end{document}